\newcommand{\cosimp}[3]{\xymatrix@1{#1 \ar@<.4ex>[r] \ar@<-.4ex>[r] & {\ }#2 \ar@<0.8ex>[r] \ar[r] \ar@<-.8ex>[r] & {\ } #3 \ar@<1.2ex>[r] \ar@<.4ex>[r] \ar@<-.4ex>[r] \ar@<-1.2ex>[r] & \cdots }}
\newcommand{\colim}{\mathop{\mathrm{colim}}}
\newcommand{\adjunction}[4]{\xymatrix@1{#1{\ } \ar@<0.3ex>[r]^{ {\scriptstyle #2}} & {\ } #3 \ar@<0.3ex>[l]^{ {\scriptstyle #4}}}}
\begin{document}

\newtheorem{theorem}{Theorem}[section]
\newtheorem*{theorem*}{Theorem}
\newtheorem*{definition*}{Definition}
\newtheorem{proposition}[theorem]{Proposition}
\newtheorem{lemma}[theorem]{Lemma}
\newtheorem{corollary}[theorem]{Corollary}

\theoremstyle{definition}
\newtheorem{definition}[theorem]{Definition}
\newtheorem{question}[theorem]{Question}
\newtheorem{remark}[theorem]{Remark}
\newtheorem{warning}[theorem]{Warning}
\newtheorem{example}[theorem]{Example}
\newtheorem{notation}[theorem]{Notation}
\newtheorem{convention}[theorem]{Convention}
\newtheorem{construction}[theorem]{Construction}
\newtheorem{claim}[theorem]{Claim}

\title{Finiteness of \'etale fundamental groups by reduction modulo $p$}
\author{Bhargav Bhatt}
\author{Ofer Gabber}
\author{Martin Olsson}


\maketitle

\begin{abstract}
We introduce a spreading out technique to deduce finiteness results for \'etale fundamental groups of complex varieties by characteristic $p$ methods, and apply this to recover a finiteness result proven recently for local fundamental groups in characteristic $0$ using birational geometry. 
\end{abstract}

\section{Introduction}
\label{sec:Intro}

The main goal of this paper is to introduce a spreading out technique for deducing finiteness results for \'etale fundamental groups of varieties in characteristic $0$ from the finiteness of the \'etale fundamental groups of the characteristic $p$ specializations of the varieties. Our motivation was to deduce a recent finiteness result of Xu \cite[Theorem 1]{ChenyangPi1}, proven using fundamental advances \cite{BCHM} in the minimal model program, from the characteristic $p$ analog, which was established very recently by Carvajal--Rojas, Schwede, and Tucker  \cite[Theorem 5.1]{CRST:pi1} and is considerably more elementary. Xu's result, which answers an algebraic version of a question of Koll\'ar \cite[Question 26]{Kollar},  asserts:
\begin{theorem}[{\cite[Theorem 1]{ChenyangPi1}}]
\label{thm:XuIntro}
Let $X/\mathbf{C}$ be a finite type $\mathbf{C}$-scheme and let $x\in X(\mathbf{C})$ be a closed point.  Assume that $X$ has klt singularities in the sense of \cite[Definition 2.8]{KollarSingMMP}, and that $\dim(X) \geq 2$. Then the \'etale local fundamental group 
\[ \pi_1^{loc}(X,x) := \pi_1(\mathrm{Spec}(\widehat{\mathcal{O}_{X,x}}) - \{x\})\]
 is finite. More generally, the same is true for any finite type normal pointed $\mathbf{C}$-scheme  $(X,x)$ of dimension $\geq 2$ that admits an effective $\mathbf{Q}$-Weil divisor $\Delta$ with the pair $(X,\Delta)$ being klt in a neighborhood of $x$.
\end{theorem}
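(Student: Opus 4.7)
The strategy is to deduce Theorem~\ref{thm:XuIntro} from the characteristic-$p$ analog \cite[Theorem 5.1]{CRST:pi1} via a spreading-out argument. There are three main steps: (1) reduce the computation of $\pi_1^{loc}(X,x)$ to an algebraic setting amenable to spreading out; (2) spread the pointed pair $(X,\Delta,x)$ out over a finitely generated $\mathbf{Z}$-subalgebra $R\subset\mathbf{C}$ and apply the characteristic-$p$ result to closed fibers of large residue characteristic; and (3) compare the two fundamental groups across characteristics.

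For~(1), I would use Artin approximation to identify $\pi_1(\mathrm{Spec}(\widehat{\mathcal{O}_{X,x}})\setminus\{x\})$ with the cofiltered limit, over affine étale neighborhoods $(U,u) \to (X,x)$, of the étale fundamental groups of the punctured schemes $U\setminus\{u\}$; passing to an étale neighborhood preserves the klt hypothesis, so one may assume $X$ itself is such an affine neighborhood. For~(2), choose a finitely generated $R\subset\mathbf{C}$ and a flat model $(\mathcal{X},\mathcal{D},\bar x)\to\mathrm{Spec}(R)$ of $(X,\Delta,x)$; after shrinking $\mathrm{Spec}(R)$, all closed fibers of residue characteristic $p$ larger than some $p_0$ remain klt near $\bar x$, and hence (by standard reduction-mod-$p$ results for klt singularities) are strongly $F$-regular there. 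The CRST theorem then gives that $\pi_1^{loc}(\mathcal{X}_s,x_s)$ is finite for each such closed point $s\in\mathrm{Spec}(R)$.

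The crux is step~(3): transfer the finiteness in characteristic $p$ back to characteristic zero. A finite quotient $G$ of $\pi_1^{loc}(X_\mathbf{C},x)$ is classified by a finite étale $G$-cover of the punctured henselization which, by approximation, descends to a Zariski neighborhood of $x$ and then spreads out to a cover over an open $U_G\subset\mathrm{Spec}(R)$; at any closed point $s\in U_G$ over which the specialization of this cover remains connected, $G$ is realized as a quotient of $\pi_1^{loc}(\mathcal{X}_s,x_s)$, forcing $|G|\leq|\pi_1^{loc}(\mathcal{X}_s,x_s)|<\infty$. To conclude, one needs a \emph{single} closed point $s_0$ that works uniformly for all $G$; since $\pi_1^{loc}(X_\mathbf{C},x)$ is topologically finitely generated, one may hope to extract such an $s_0$ by a diagonal argument over the finite quotients together with an Artin-approximation-style control of the spreading-out at $s_0$.

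The main obstacle will be precisely this uniform control in step~(3): the punctured spectrum is not proper, so the classical Grothendieck specialization results for $\pi_1$ do not apply directly, and both the preservation of connectedness under specialization and the existence of a single ``good'' closed point $s_0$ seeing all finite covers simultaneously require care. Providing a robust mechanism for this — presumably the content of the ``spreading-out technique'' announced in the introduction — is where I expect the technical heart of the argument to lie.
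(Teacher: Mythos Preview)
Your outline tracks the paper through steps~(1) and~(2), but the resolution you propose for step~(3) contains a genuine gap, and it is precisely here that the paper's argument diverges from what you sketch.

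First, your plan to find a \emph{single} closed point $s_0$ over which all finite quotients of $\pi_1^{loc}(X,x)$ are realized cannot work as stated. At any closed point $s$ of residue characteristic $p$, the best comparison available is between $\pi_1^{loc}(X,x)^{(p)}$ and $\pi_1^{loc}(\mathcal{X}_s,x_s)^{(p)}$---the prime-to-$p$ quotients only. There is no mechanism that recovers the $p$-part of $\pi_1^{loc}(X,x)$ from the fiber at $s$, so no single closed point sees the whole group. Relatedly, your appeal to topological finite generation of $\pi_1^{loc}(X,x)$ is circular: this is not known a priori for an arbitrary normal singularity and is in fact a consequence of the finiteness you are trying to prove.

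What the paper does instead is use \emph{all} residue characteristics simultaneously, together with two ingredients you are missing. The first is a constructibility theorem (proved via $h$-hypercovers by SNC pairs and root stacks, which compactify the punctured situation you correctly flag as problematic) giving $\pi_1^{loc}(X,x)^{(p)} \simeq \pi_1^{loc}(\mathcal{X}_s,x_s)^{(p)}$ for $s$ of characteristic $p$, for almost all $p$. The second---and this is the idea your proposal lacks entirely---is that mere finiteness of $\pi_1^{loc}(\mathcal{X}_s,x_s)$ is not enough: one needs the \emph{quantitative} bound $\#\pi_1^{loc}(\mathcal{X}_s,x_s) \leq p^d$ coming from the $F$-signature estimate, and then an abstract profinite-group criterion (using the prime number theorem) asserting that if $\#G^{(p)} \leq p^c$ for almost all $p$ then $G$ is finite. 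Without the polynomial bound and this criterion, knowing only that each $G^{(p)}$ is finite does not force $G$ to be finite; the paper even gives a counterexample ($G=\prod_{n\geq 5} A_n$).
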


We relate this to fundamental groups in positive characteristic as follows.  Spread the pair $(X,x)$ out to a pointed scheme $(X_A,x_A)$ over a finitely generated $\mathbf{Z}$-subalgebra $A \subset \mathbf{C}$, and for a geometric point $\bar y\rightarrow \mathrm{Spec}(A)$ let $(X_{\bar y}, x_{\bar y})$ denote the base change of $(X_A, x_A)$.  
  Our main result on spreading out is the following:
\begin{theorem}
\label{thm:MainThmIntro}
 Assume that there exists a positive integer $c$ and a dense open subset $U\subset \mathrm{Spec}(A)$ such that 
 for all geometric points $\bar y\rightarrow U$ with image a closed point of $U$ we have 
\begin{equation}
\label{eq:PolyBoundIntro}
\# \pi^{loc}_{1}(X_{\bar y},x_{\bar y})^{(p_{\bar{y}})} \leq p_{\bar{y}}^c, 
\end{equation}
where $p_{\bar{y}}$ is the characteristic of the residue field of $\bar y$, and $\pi^{loc}_{1}(X_{\bar y},x_{\bar y})^{(p_{\bar{y}})}$ is the maximal prime-to-$p_{\bar{y}}$ quotient of $\pi_1^{loc}(X_{\bar{y}}, x_{\bar{y}})$. Then $\pi_1^{loc}(X,x)$ is finite.
\end{theorem}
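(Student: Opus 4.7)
The plan is to transfer the polynomial bound on prime-to-$p$ quotients from the characteristic-$p$ fibers back to $\pi_1^{loc}(X,x)$ via specialization, and then to conclude finiteness by a purely group-theoretic argument.

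First, I would establish a comparison between $\pi_1^{loc}(X,x)$ and $\pi_1^{loc}(X_{\bar y},x_{\bar y})$ for a closed geometric point $\bar y\in U$ of residue characteristic $p$. Letting $\bar\eta$ denote a geometric generic point of $\mathrm{Spec}(A)$, the goal is a chain of isomorphisms
\[
\pi_1^{loc}(X,x) \;\cong\; \pi_1^{loc}(X_{\bar\eta},x_{\bar\eta}),\qquad
\pi_1^{loc}(X_{\bar\eta},x_{\bar\eta})^{(p)} \;\cong\; \pi_1^{loc}(X_{\bar y},x_{\bar y})^{(p)}.
\]
The first should follow from invariance of the \'etale $\pi_1$ under extensions of algebraically closed base fields, applied carefully to the punctured formal neighborhood (perhaps after replacing completions by Henselizations via Artin approximation). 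The second is the local analog of Grothendieck's specialization theorem for proper smooth families (SGA~1, Exp.~X.3). Combined with the hypothesis, these yield $|\pi_1^{loc}(X,x)^{(p)}| \leq p^c$ for every prime $p$ arising as the residue characteristic of a closed point of $U$; this is a cofinite set of primes because $A\subset\mathbf{C}$ is finitely generated over $\mathbf{Z}$ of characteristic zero.

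To conclude finiteness, I would use the following elementary group-theoretic observation: any profinite group $G$ with $|G^{(p)}| \leq p^c$ for all $p$ in a cofinite set $S$ of primes is finite. Indeed, given any finite quotient $Q$ of $G$, pick the smallest $p\in S$ not dividing $|Q|$; since $Q$ then has no nontrivial $p$-element, $O^p(Q)=\{1\}$, so $Q=Q^{(p)}$ is a quotient of $G^{(p)}$ and $|Q|\leq p^c$. Elementary number theory gives $p=O(\log|Q|)$, hence $|Q|\leq (C\log|Q|)^c$, bounding $|Q|$ independently of $Q$; a profinite group with a uniform bound on the orders of its finite quotients is itself finite.

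The hard part will be establishing the specialization isomorphism on prime-to-$p$ quotients for the punctured formal neighborhood. The classical Grothendieck/SGA specialization applies only to proper smooth families, whereas the punctured spectrum of $\widehat{\mathcal{O}_{X_A,x_A}}$ is neither proper nor of finite type over $A$, and $\widehat{\mathcal{O}_{X_A,x_A}}$ does not base change naively along $A\to k(\bar y)$. Overcoming this will likely require combining Artin approximation (or N\'eron--Popescu desingularization) to replace completions by Henselizations of essentially finite type $A$-algebras with a tame local specialization statement for \'etale fundamental groups, in the spirit of SGA~4 Exp.~XVI or Fujiwara--Gabber-style nearby cycle arguments.
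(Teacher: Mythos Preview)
Your two-step strategy---(i) specialize the prime-to-$p$ local fundamental group from characteristic $0$ to characteristic $p$, (ii) feed the resulting bounds into a group-theoretic finiteness criterion---is exactly the structure of the paper's proof, which literally reads ``Combine Theorem~\ref{thm:constructibility} with Proposition~\ref{prop:ProfiniteGroupFinitenessCriterion}.'' Your group-theoretic argument is correct and is in fact a slightly more direct presentation of the paper's Proposition~\ref{prop:ProfiniteGroupFinitenessCriterion}: rather than building an infinite tower and reaching a contradiction via Stirling, you directly bound $|Q|$ for every finite quotient $Q$ using the estimate $p=O(\log|Q|)$ for the smallest admissible prime not dividing $|Q|$; both arguments rest on the same pigeonhole-plus-prime-number-theorem idea.

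Where your proposal is incomplete is precisely the step you flag: the specialization isomorphism $\pi_1^{loc}(X,x)^{(p)}\simeq\pi_1^{loc}(X_{\bar y},x_{\bar y})^{(p)}$. The paper does not attack this via Artin approximation or nearby-cycle machinery. Instead it chooses a truncated $h$-hypercover $X_\bullet\to Z$ by smooth schemes with the fiber over the marked point an SNC divisor $D_\bullet$, uses Abhyankar's lemma (packaged via root stacks $\mathcal{X}_{\bullet,\ell}$) to rewrite $\mathrm{Fet}^{(p)}$ of the punctured spectrum as a finite limit of $\mathrm{Fet}^{(p)}(\mathcal{D}_{\bullet,J,\ell})$ for smooth proper stacks over the residue field, and then invokes the classical specialization theorem for smooth proper families termwise. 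This reduction to the smooth proper case via $h$-descent is the missing idea in your plan; once you have it, the rest goes through as you describe.
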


Note that if the right side of the bound in \eqref{eq:PolyBoundIntro} above was independent of the residue characteristic, then Theorem~\ref{thm:MainThmIntro} would be unsurprising.  The main novelty here is that a bound in characteristic $p$ that grows polynomially in $p$ suffices to deduce finiteness in characteristic $0$. In fact, much weaker growth bounds on $\#\pi^{loc}_1(X_{\bar{y}},x_{\bar{y}})$ suffice for the result above, but we only need the polynomial one in our application. The proof of Theorem~\ref{thm:MainThmIntro} has two ingredients that are themselves perhaps of independent interest: a constructibility result for the tame local fundamental groups in arithmetic families without any restrictions on singularities (see Theorem~\ref{thm:constructibility}), and an abstract finiteness criterion for a profinite group (see Proposition~\ref{prop:ProfiniteGroupFinitenessCriterion}).


To apply Theorem~\ref{thm:MainThmIntro} to reprove Theorem~\ref{thm:XuIntro}, recall that a fundamental result in the theory of $F$-singularities tells us that klt singularities in characteristic $0$ specialize to strongly $F$-regular singularities in characteristic $p$; see \cite{HaraRational,HaraWatanabe,Takagi} and \cite[Theorem 2]{SmithGuide} for the relevant definitions and further references. Thus, one needs the finiteness of local fundamental groups of strongly $F$-regular singularities in characteristic $p$ with a bound of the form \eqref{eq:PolyBoundIntro} above. The finiteness is provided by the following recent result \cite{CRST:pi1}, giving the characteristic $p$ analog of Theorem~\ref{thm:XuIntro}.

\begin{theorem}[{\cite[Theorem 5.1]{CRST:pi1}}]
\label{thm:CSTIntro}
Let $k$ be an algebraically closed field of positive characteristic $p$, and let $X/k$ be a finite type $k$-scheme. Let $x \in X(k)$ be a point such that $X$ is strongly $F$-regular at $x$ with dimension $\geq 2$.
 Then $\pi_1^{loc}(X,x)$ is finite.
\end{theorem}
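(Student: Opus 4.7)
The plan is to control the finite \'etale covers of the punctured spectrum via the $F$-signature, following the strategy pioneered by Carvajal--Rojas.

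First, replace $X$ by $\mathrm{Spec}(R)$ where $R := \widehat{\mathcal{O}_{X,x}}$; this does not change $\pi_1^{loc}(X, x) = \pi_1(\mathrm{Spec}(R) \setminus \{\mathfrak{m}\})$. So we may assume $R$ is a complete local Noetherian ring of dimension $\geq 2$, with algebraically closed residue field $k$ of characteristic $p$, and that $R$ is strongly $F$-regular. Since $k$ is perfect, $R$ is $F$-finite, and strong $F$-regularity then implies $s(R) > 0$.

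Next, any connected finite \'etale cover $V \to U := \mathrm{Spec}(R) \setminus \{\mathfrak{m}\}$ extends canonically to a finite normal cover $\mathrm{Spec}(S) \to \mathrm{Spec}(R)$ by normalizing $R$ inside the function field of $V$; this works because $R$ is normal (a consequence of strong $F$-regularity). The resulting cover is \'etale on $U$, hence \'etale in codimension one since $\dim(R) \geq 2$. Because $R$ is Henselian and $k$ is algebraically closed, $S$ is again local with residue field $k$. The correspondence $V \leftrightarrow S$ identifies finite quotients of $\pi_1^{loc}(X,x)$ with Galois groups of such finite local extensions $R \hookrightarrow S$.

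The technical heart is the transformation rule for $F$-signature due to Carvajal--Rojas: for a finite local extension $R \hookrightarrow S$ of $F$-finite normal local domains with equal residue fields, with $R$ strongly $F$-regular and the extension generically separable and \'etale in codimension one,
\[
[K(S) : K(R)] \cdot s(R) \;\leq\; s(S) \;\leq\; 1.
\]
Hence $[K(S) : K(R)] \leq 1/s(R)$. Every finite quotient of $\pi_1^{loc}(X, x)$ therefore has order at most $\lfloor 1/s(R) \rfloor$, and a profinite group whose finite continuous quotients have uniformly bounded order is itself finite. (Sanity check: for the $A_{n-1}$ surface singularity one has $s(R) = 1/n$ and $\pi_1^{loc} = \mathbf{Z}/n$, so the bound is sharp.)

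The principal obstacle is proving the transformation rule itself; this is the main technical input, and requires tracking how the splitting ideals defining the $F$-signature transform under pushforward along the finite cover, building on earlier methods of Von Korff, Tucker, and others. Everything else (normality of $R$, Henselian extension of \'etale covers, and the passage from bounded finite quotients to finiteness of a profinite group) is formal.
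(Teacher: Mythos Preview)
Your proposal is correct and follows the same approach the paper relies on: the paper's own proof simply invokes the bound $\#\pi_1^{loc}(X,x) \leq 1/s(X,x)$ from \cite[Theorem 5.1]{CRST:pi1}, and what you have written is an accurate sketch of how that bound is obtained there, via the transformation rule $[K(S):K(R)]\cdot s(R) \leq s(S) \leq 1$ for finite local extensions \'etale in codimension one. Your outline is in fact more detailed than the paper's treatment, which defers entirely to \cite{CRST:pi1}.
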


To deduce \ref{thm:XuIntro} from \ref{thm:MainThmIntro} and \ref{thm:CSTIntro} it remains to find an effective bound on $\pi_1^{loc}(X,x)$, at least when specializing from characteristic $0$ to large characteristics. Such a bound can be obtained from the methods of \cite{CRST:pi1}. Let $A$ be a finitely generated normal $\mathbf{Z}$-algebra and let $(X_A, x_A)$ be a normal pointed flat $A$-scheme.  Fix also an imbedding $A\hookrightarrow \mathbf{C}$ and let $(X_{\mathbf{C}}, x_{\mathbf{C}})$ be the base change to $\mathbf{C}$.  Assume that there exists a $\mathbf{Q}$-divisor $\Delta _{\mathbf{C}}$ on $X_{\mathbf{C}}$ such that the pair $(X_{\mathbf{C}}, \Delta _{\mathbf{C}})$ has klt singularities in a neighborhood of $x_{\mathbf{C}}$. Tucker has shown (unpublished) the following:

\begin{theorem}
\label{thm:generalbound}
There exists a dense open subset $U\subset \mathrm{Spec}(A)$ such that for every geometric point $\bar y\rightarrow U$ mapping to a closed point of $U$ we have
$$
\#\pi_1^{loc}(X_{\bar y},x_{\bar y}) \leq p_{\bar y}^d,
$$
where $d$ is the dimension of $X_{\mathbf{C}}$ and $p_{\bar y}$ is the characteristic of $k(\bar y)$.
\end{theorem}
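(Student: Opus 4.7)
The strategy is to combine the klt-to-strongly-$F$-regular reduction with a quantitative $F$-signature bound on $\pi_1^{loc}$ implicit in the proof of \cite[Theorem 5.1]{CRST:pi1}, together with a positive lower bound on the $F$-signature of the reductions derived from the fixed characteristic-zero data.

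First, I would spread out $\Delta_{\mathbf{C}}$ to an effective $\mathbf{Q}$-Weil divisor $\Delta_A$ on $X_A$ over a dense open $U \subset \mathrm{Spec}(A)$, arranging (after shrinking $U$) that $X_A \to U$ is flat with geometrically normal fibers, that $\Delta_A$ specializes compatibly to $\Delta_{\bar y}$ at each geometric point, and that a log resolution of $(X_{\mathbf{C}}, \Delta_{\mathbf{C}})$ spreads out over $U$.  By the Hara--Watanabe--Takagi reduction theorem (\cite{HaraRational,HaraWatanabe,Takagi}), we may further shrink $U$ so that the pair $(X_{\bar y}, \Delta_{\bar y})$ is strongly $F$-regular at $x_{\bar y}$ for every closed geometric point $\bar y \in U$.

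Next, the argument of \cite[Theorem 5.1]{CRST:pi1} extends in a straightforward manner to pairs and yields the quantitative bound
$$
\#\pi_1^{loc}(X_{\bar y}, x_{\bar y}) \;\leq\; \frac{1}{s(R_{\bar y}, \Delta_{\bar y})},
$$
where $R_{\bar y} := \widehat{\mathcal{O}_{X_{\bar y}, x_{\bar y}}}$ and $s(R, \Delta)$ denotes the $F$-signature of the pair.  The mechanism is that a finite connected cover $\mathrm{Spec}(S) \to \mathrm{Spec}(R_{\bar y})$ étale in codimension one corresponds to a quotient $G$ of $\pi_1^{loc}$; the strong $F$-regularity of $(R_{\bar y}, \Delta_{\bar y})$, together with freeness of the $G$-action on the Frobenius-splitting witnesses, forces $\#G \cdot s(S, \Delta_S) \leq s(R_{\bar y}, \Delta_{\bar y})$, which gives the displayed bound since $s(S, \Delta_S)$ is itself positive.

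The main anticipated obstacle is to establish a lower bound $s(R_{\bar y}, \Delta_{\bar y}) \geq p_{\bar y}^{-d}$ over a dense open, for instance via a uniform positive lower bound $s(R_{\bar y}, \Delta_{\bar y}) \geq s_0 > 0$ valid for all large $p_{\bar y}$.  My approach would be to exploit the fact that the discrepancies in the spread-out log resolution are $\bar y$-independent rational numbers inherited from fixed characteristic-zero data; combined with formulas relating $F$-signature to log discrepancies in the $F$-regular/klt dictionary, and with semicontinuity of the $F$-signature in arithmetic families, these should deliver such a uniform bound.  After further shrinking $U$ to exclude primes below $s_0^{-1/d}$, the inequality $\#\pi_1^{loc}(X_{\bar y}, x_{\bar y}) \leq p_{\bar y}^d$ follows.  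The hard part is making the comparison rigorous in the relative setting, since the $F$-signature is a subtle limit invariant whose semicontinuous behavior over mixed-characteristic bases is not entirely standard, and controlling it uniformly will require carefully tracking the effective-divisor structure of $\Delta_A$ against the relative Frobenius on the spread-out log resolution.
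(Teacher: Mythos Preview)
Your overall architecture matches the paper's: spread out, reduce to strong $F$-regularity of the pairs via Hara--Watanabe/Takagi, and invoke the CRST inequality $\#\pi_1^{loc} \leq 1/s$. (One slip in your sketch: the transformation rule under a degree-$\#G$ quasi-\'etale cover is $s(S,\Delta_S) = \#G \cdot s(R,\Delta_R)$, not the reverse; combined with $s \leq 1$ this yields the bound. As you wrote it, $\#G \cdot s(S) \leq s(R)$ together with $s(S)>0$ does not give $\#G \leq 1/s(R)$.)

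The real gap is the $F$-signature lower bound. You aim for a \emph{uniform} $s(R_{\bar y},\Delta_{\bar y}) \geq s_0 > 0$ via discrepancies of a spread-out log resolution and semicontinuity across the family, but neither ingredient is available: there is no formula computing $F$-signature from log discrepancies, and semicontinuity of $F$-signature as the residue characteristic varies is not established---indeed, a uniform positive lower bound for the reductions of a fixed klt singularity is essentially the open question of the existence of a limit $F$-signature. The paper sidesteps this entirely and produces the $p$-dependent bound $s(R_{\bar y}) > p_{\bar y}^{-d}$ directly by a different mechanism. One fixes a Noether normalization $A[x_1,\dots,x_d] \to R$ with discriminant $c$; since $(X_{\mathbf C},\Delta_{\mathbf C})$ is klt, the log canonical threshold along $\mathrm{div}(c)$ is some $\alpha_0>0$, and Takagi's theorem then makes $(X_{\bar y}, \Delta_{\bar y} + \tfrac{1}{p_{\bar y}-1}\mathrm{div}(c_{\bar y}))$ $F$-pure once $\tfrac{1}{p_{\bar y}-1}<\alpha_0$. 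This $F$-purity says exactly that $1 \mapsto c_{\bar y}^{1/p_{\bar y}}$ splits $R_{x_{\bar y}} \hookrightarrow R_{x_{\bar y}}^{1/p_{\bar y}}$, and the first proof of \cite[Theorem~5.1]{PolstraTuckerFsig} converts precisely such a discriminant splitting into $s > p^{-d}$. No semicontinuity or discrepancy calculus is needed.
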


In fact, in the setup of Theorem~\ref{thm:CSTIntro}, the proof in \cite{CRST:pi1} implies that $\#\pi_1^{loc}(X,x)$ is bounded above by the inverse of the $F$-signature $s(X,x)$, which is a numerical invariant of the singularity at $x$. Theorem~\ref{thm:generalbound} is thus a consequence of finding a suitable lower bound on $s(X_{\bar{y}},x_{\bar{y}})$ as $\bar{y}$ varies through geometric points of $\mathrm{Spec}(A)$;  we supply a proof of the relevant bound, based on ideas from \cite{PolstraTuckerFsig}, in \S \ref{sec:proofs}.

The hypercover techniques that go into proving Theorem~\ref{thm:MainThmIntro} are quite general, and have other applications. For example, we use them to prove a constructibility result for tame local fundamental groups of ``Milnor tubes" around points of a given subvariety of an algebraic variety (see Theorem~\ref{thm:ConstructibilityMilnorpi1}). Using this, in \S \ref{sec:GKP}, we give a direct Galois-theoretic proof of the following result of Greb, Kebekus, and Peternell, yielding a global version of Theorem~\ref{thm:XuIntro}; as a bonus, we can avoid the quasi-projectivity hypothesis present in \cite{GKP}. 

\begin{theorem}[{\cite[Theorem 1.5]{GKP}}]
\label{thm:GKPIntro}
Let $X$ be a connected normal scheme of finite type over $\mathbf{C}$. Assume that the pair $(X,\Delta)$ is klt for some effective $\mathbf{Q}$-Weil divisor $\Delta$ on $X$. Then there exists a finite cover $f:Y \to X$ with $Y$ connected and normal such that $f$ is \'etale over the smooth locus $X_{sm} \subset X$, and $f$ induces an isomorphism $\pi_1(f^{-1}(X_{sm})) \xrightarrow{\simeq} \pi_1(Y)$.
\end{theorem}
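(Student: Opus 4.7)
The plan is to construct $Y$ as the normalization of $X$ in a suitable connected finite Galois étale cover $V \to X_{sm}$ with group $Q = \pi_1(X_{sm})/N$, where $N \subset \pi_1(X_{sm})$ is an open normal subgroup of finite index; by normality the singular locus $Z = X \setminus X_{sm}$ has codimension at least $2$ in $X$. The whole argument is Galois-theoretic: $N$ must be chosen so that passing from $V$ to $Y$ annihilates every local monodromy at points above $Z$.

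For each closed point $x \in Z$, fix a small Milnor tube $T_x$ around $x$ and let $J_x \subset \pi_1(X_{sm})$ be the image of $\pi_1(T_x \cap X_{sm}) \to \pi_1(X_{sm})$, a subgroup defined up to conjugation. A standard calculation in Galois categories (using that $\pi_1$ of the spectrum of a complete local $\mathbf{C}$-algebra is trivial) identifies the kernel of $\pi_1(V) \to \pi_1(Y)$ with the normal subgroup of $\pi_1(V) = N$ generated by $\{J_x \cap N : x \in Z\}$, so the task reduces to producing $N$ with $J_x \cap N = \{1\}$ for every closed $x \in Z$. Pointwise finiteness of $J_x$ follows from Theorem~\ref{thm:XuIntro}: at an isolated singular point, $\pi_1(T_x \cap X_{sm}) \cong \pi_1^{loc}(X,x)$ is finite by Xu; at a positive-dimensional stratum $S$ (smooth after refinement), the map $T_x \cap X_{sm} \to T_x \cap S$ is a fiber bundle over a contractible base whose fiber is a punctured transversal slice of dimension $\geq 2$, still KLT by adjunction, so the fiber has finite $\pi_1$ by Xu again.

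To upgrade pointwise finiteness into a uniform statement, invoke Theorem~\ref{thm:ConstructibilityMilnorpi1} to produce a stratification $Z = \bigsqcup_{i=1}^k S_i$ into finitely many connected locally closed strata on which the Milnor tube fundamental groups and their maps to $\pi_1(X_{sm})$ assemble into local systems; in particular, the conjugacy class $[J_x]$ is constant along each $S_i$ with value $[J_{S_i}]$. Residual finiteness of the profinite group $\pi_1(X_{sm})$ then supplies, for each representative $J_{S_i}$, an open normal finite-index $N_i \subset \pi_1(X_{sm})$ with $N_i \cap J_{S_i} = \{1\}$, and $N := \bigcap_{i=1}^k N_i$ satisfies $J_x \cap N = \{1\}$ uniformly in $x \in Z$. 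The associated cover $V$ is connected because $N$ is normal, the normalization $f : Y \to X$ is finite, connected, normal, and étale over $X_{sm}$, and by the vanishing condition just arranged $\pi_1(f^{-1}(X_{sm})) \to \pi_1(Y)$ is an isomorphism.

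The principal obstacle is the constructibility packaging in the third step: pointwise finiteness of $J_x$ from Xu's theorem is not enough on its own, because $Z$ has infinitely many closed points, and without Theorem~\ref{thm:ConstructibilityMilnorpi1} there would be no way to produce a single finite-index $N$ intersecting every $J_x$ trivially. This constructibility—together with the precise identification of the kernel of $\pi_1(V)\to\pi_1(Y)$ in Galois terms—is the heart of the proof.
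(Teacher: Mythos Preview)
Your Galois-theoretic packaging is sound: the description of $\ker(\pi_1(V)\to\pi_1(Y))$ via the local images $J_x\cap N$, and the use of Theorem~\ref{thm:ConstructibilityMilnorpi1} to reduce to finitely many conjugacy classes, match the paper's approach. The gap is in the finiteness of $J_x$ when $x$ is not an isolated point of $Z$. In that situation $T_x(Z)=X^{sh}_x\setminus Z$ is strictly smaller than the punctured spectrum $X^{sh}_x\setminus\{x\}$, so Theorem~\ref{thm:XuIntro} does not apply directly; your case division (isolated point versus positive-dimensional stratum) also misses closed points lying on a $0$-dimensional stratum contained in the closure of a larger one. The fiber-bundle reduction you propose at positive-dimensional strata is an analytic/Whitney statement not supplied by the algebraic constructibility of Theorem~\ref{thm:ConstructibilityMilnorpi1}, and the assertion that a transversal slice remains klt is not ``adjunction'' but a Bertini theorem requiring a general hyperplane, hence quasi-projectivity. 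This is exactly the route of \cite{GKP}, and is why their statement carries a quasi-projectivity hypothesis that the present paper removes.

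The paper's fix is to argue by induction on $\dim(Z)$ and to invoke Theorem~\ref{thm:XuIntro} only at the \emph{generic point} $\eta$ of each component of the dense open stratum $Z_{open}$: there $T_\eta(Z)=X^{sh}_\eta\setminus\{\eta\}$ literally is the punctured spectrum (no other point of $Z$ specializes to $\eta$), so Xu applies via Proposition~\ref{prop:Localpi1invariance} and the Lefschetz principle without any slicing. One then chooses the open normal subgroup $H$ as you do, passes to the normalized cover $\overline V\to X$, which is now \'etale over all of $Z_{open}$, and applies the inductive hypothesis to $(\overline V,g^*\Delta)$ with the strictly smaller set $g^{-1}(Z_{closed})$. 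Your one-shot plan cannot be rescued simply by moving to generic points of every stratum at once: at the generic point of a lower stratum the preimage of $Z$ in the henselization still exceeds the closed point, so the inductive passage to a cover---which promotes lower strata to open ones on the new variety---is what makes the argument close.
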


Finally, though we do not discuss it here, similar hypercover techniques can also be used, for example, to obtain constructibility results for fundamental groups of open varieties.

\subsection*{Strategy of proof of \ref{thm:MainThmIntro} and an outline of the article}
The proof of Theorem~\ref{thm:MainThmIntro}, which is presented in \S \ref{sec:proofs}, has two  parts. First, in \S \ref{S:section4}, using $h$-descent for \'etale fundamental groups, we establish a constructibility result that identifies the maximal prime-to-$p_{\bar{y}}$ quotients of $\pi_1^{loc}(X,x)$ and $\pi_1^{loc}(X_{\bar{y}},x_{\bar{y}})$ for $\bar{y}$ varying through geometric points of an open subset of $\mathrm{Spec}(A)$. The key idea is to describe either group completely in terms of components of the special fibre of a (truncated) hypercover by normal crossings pairs, and then exploit specialization properties of \'etale fundamental groups of smooth projective varieties. The necessary tools are developed separately in \S \ref{sec:tools} in the language of root stacks; we also use the same techniques in \S \ref{S:section3} and \S \ref{sec:GKP} to prove Theorem~\ref{thm:GKPIntro}. Thanks to Theorem~\ref{thm:generalbound}, we then find ourselves in the following situation: the profinite group $G := \pi_1^{loc}(X,x)$ has the property that its maximal prime-to-$p$ quotient $G^{(p)}$ is finite of size $\leq p^c$ for all but finitely many primes $p$. We then prove, in \S \ref{sec:ProfiniteGroup}, the general statement that any such $G$ must be finite.  The main idea is to exploit the fact that if $G$ admits a finite quotient $Q$ whose order $N$ is large, then there must be many primes $p \ll N$ that are coprime to $N$, and thus $Q$ must be a quotient of $G^{(p)}$ for a relatively small such prime $p$; the bound on $G^{(p)}$ then forces $N$ to be small, which is a contradiction.


\subsection*{Acknowledgements}
The key Proposition~\ref{prop:ProfiniteGroupFinitenessCriterion} was discovered during a coversation with Johan de Jong and Peter Scholze at a workshop in Oberwolfach, and we thank them for the discussion. We are also grateful to Karl Schwede and Kevin Tucker for discussions surrounding Theorem~\ref{thm:generalbound}. Finally, we thank Max Lieblich for a useful conversation. During the preparation of this work, BB was partially supported by NSF Grant DMS-1501461 and a Packard fellowship, and MO was partially supported by NSF grants DMS-1303173 and DMS-1601940.

\subsection*{Notation}

We adopt the following largely standard conventions in this article.

\subsubsection*{Groups} For any profinite group $G$ and a prime number $p$, we write $G^{(p)}$ for the projective limit of all continuous finite prime-to-$p$ quotients of $G$; we follow the convention that $G^{(p)} = G$ if $p = 0$. 

\subsubsection*{Categories of finite etale covers} Given a scheme (or stack) $U$, we write $\mathrm{Fet}(U)$ for its category of finite \'etale covers. Given a prime $p$, let $\mathrm{Fet}^{(p)}(U) \subset \mathrm{Fet}(U)$ be the full subcategory spanned by those finite \'etale covers whose Galois closures have degree coprime to $p$ on every connected component of $U$. Note that both $\mathrm{Fet}(U)$ and $\mathrm{Fet}^{(p)}(U)$ are contravariantly functorial in $U$.

\subsubsection*{Local fundamental groups} Given a scheme $X$ and a geometric point $x:\mathrm{Spec}(\kappa) \to X$, write $X_x^{sh}$ for the strict henselization of $X$ at $x$, and let $X_x^{sh, \circ} := X_x^{sh} - \{x\} \subset X_x^{sh}$ be the punctured spectrum. When $X_x^{sh, \circ}$ is connected (for example, if $X$ is geometrically unibranch), write $\pi_1^{loc}(X,x) := \pi_1(X_x^{sh, \circ})$ for the \'etale fundamental group of the punctured spectrum at some unspecified base point; thus, $\pi_1^{loc}(X,x)^{(p)}$ is the profinite group classifying $\mathrm{Fet}^{(p)}(X_x^{sh, \circ})$. By \cite[Corollaire on p. 579]{Elkik}, if $X$ is noetherian, we may replace the strict henselization by its completion in this discussion. 

\subsubsection*{Specializations} Given a scheme $X$ and geometric points $\eta \to X$ and $s \to X$, a specialization $\eta \rightsquigarrow s$ is (by definition) a factorisation of $\eta$ through $X^{sh}_s$. Such a factorisation gives a map $X_\eta^{sh} \to X_s^{sh}$, and allows us to view $\eta$ as a geometric point of $X^{sh}_s$. 

\subsubsection*{Simplicial techniques} The language of hypercovers of schemes, especially in the context of the $h$-topology, will be freely used. In fact, we exclusively work with hypercovers in the $2$-truncated sense (see for example \cite[3.1]{ConradCohomologicalDescent}). This has the advantage of being a more finitistic notion than that of a general hypercover; it suffices for our purposes as the functors we apply to hypercovers (such as $\mathrm{Fet}(-)$) take values in $1$-categories, so their limits are insensitive to replacing a hypercover by its $2$-truncation (as discussed next).

\subsubsection*{Category theory} We often talk about limits and colimits of categories; these are {\em always} intended in the $2$-categorical sense, and basic facts about them, such as the commutation of filtered colimits with finite limits, will often be used without further comment. We also often implicitly use the following observation to convert possibly infinite limits to finite ones: if $\mathcal{C}^\bullet$ is a cosimplicial diagram of categories, and $\mathcal{C}^\bullet_{\leq 2}$ is the corresponding $2$-truncated diagram, then $\lim C^\bullet \simeq \lim C^\bullet_{\leq 2}$.

\section{Some tools}
\label{sec:tools}

In this section, we collect together some tools that will be used in the sequel to prove constructibility results for \'etale fundamental groups in certain families.


\subsection{$h$-descent}

Recall that for a scheme $X$, the $h$-topology on the category of finite type $X$-schemes is the Grothendieck topology generated by declaring proper surjective maps and \'etale covers to be coverings.

\begin{lemma}
\label{lem:FetCD}
Let $\pi:X_\bullet \to X$ be a $2$-truncated hypercover of a noetherian scheme $X$ in the $h$-topology. Then pullback induces equivalences
\[ \mathrm{Fet}(X) \simeq \lim \mathrm{Fet}(X_\bullet) \quad \text{and} \quad \mathrm{Fet}^{(p)}(X) \simeq \lim \mathrm{Fet}^{(p)}(X_\bullet).\]
The same is also true if $X$ is a noetherian algebraic stack.
\end{lemma}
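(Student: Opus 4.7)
The plan is to treat the $\mathrm{Fet}$ equivalence as the main content, prove it by reducing to standard $h$-descent results, and then obtain the $\mathrm{Fet}^{(p)}$ refinement as a formal consequence. I would proceed in three steps.

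First, I would reduce from algebraic stacks to schemes. Given a noetherian algebraic stack $X$, choose a smooth surjection from a scheme and take its \v{C}ech nerve. Since $\mathrm{Fet}$ and $\mathrm{Fet}^{(p)}$ satisfy smooth (even \'etale) descent, a bisimplicial manipulation reduces the stack case to the scheme case. I would henceforth assume $X$ is a noetherian scheme.

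Second, for schemes, the $h$-topology is generated by \'etale covers and proper surjective morphisms. \'Etale descent for $\mathrm{Fet}$ is classical, as finite \'etale maps are representable in the \'etale topology. The substantive content is proper descent: for a proper surjection $Y \to X$ of noetherian schemes with $2$-truncated \v{C}ech nerve $Y^\bullet_{\leq 2}$, one has $\mathrm{Fet}(X) \simeq \lim \mathrm{Fet}(Y^\bullet_{\leq 2})$. This is a form of proper descent for $\pi_1$, documented for instance in Voevodsky's $h$-descent results or Rydh's work; alternatively, it follows from topological invariance of $\mathrm{Fet}$ under universal homeomorphisms combined with alterations. A general $2$-truncated $h$-hypercover is refined by combinations of these generating classes, yielding the $\mathrm{Fet}$ equivalence. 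The $2$-truncation suffices because the limit over a cosimplicial $1$-category is insensitive to higher levels.

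For the $\mathrm{Fet}^{(p)}$ version, I would view it as a full-subcategory refinement of the $\mathrm{Fet}$ equivalence. Since pullback preserves the prime-to-$p$ condition, $\lim \mathrm{Fet}^{(p)}(X_\bullet)$ sits as a full subcategory of $\lim \mathrm{Fet}(X_\bullet) \simeq \mathrm{Fet}(X)$, and the task is to identify this subcategory with $\mathrm{Fet}^{(p)}(X)$. The forward inclusion is immediate. For the reverse, one uses that the $h$-cover $X_0 \to X$ is surjective on connected components, and that the Galois closure degree of a finite \'etale cover can be read off from its restriction to an $h$-cover together with the descent data on $X_1$ and $X_2$. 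The main obstacle is the proper descent input for the $\mathrm{Fet}$ equivalence, which carries genuine geometric content beyond formal category theory; the prime-to-$p$ refinement is a Galois-theoretic bookkeeping task that genuinely uses the full hypercover structure rather than just the cover $X_0 \to X$.
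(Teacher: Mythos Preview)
For the $\mathrm{Fet}$ equivalence your approach is correct and essentially the paper's: the paper simply invokes effective $h$-descent for finite \'etale morphisms from \cite[\S IX, Theorem 4.12]{SGA1}, and your decomposition into a stack-to-scheme reduction followed by \'etale and proper descent is a more explicit route to the same destination.

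The $\mathrm{Fet}^{(p)}$ part, however, has a genuine gap---one the paper's one-line proof also does not address. You correctly isolate the nontrivial direction: an object of $\lim \mathrm{Fet}^{(p)}(X_\bullet)$ descends to some $E \in \mathrm{Fet}(X)$, and one must show $E \in \mathrm{Fet}^{(p)}(X)$. But because $\mathrm{Fet}^{(p)} \subset \mathrm{Fet}$ is a \emph{full} subcategory and pullback preserves it, membership in $\lim \mathrm{Fet}^{(p)}(X_\bullet)$ amounts only to the condition $E \times_X X_0 \in \mathrm{Fet}^{(p)}(X_0)$; the descent isomorphism over $X_1$ and the cocycle over $X_2$ impose nothing further, contrary to your closing remark. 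And this condition on $X_0$ alone does not suffice in general. Take $X$ connected and $Y \to X$ a connected Galois \'etale cover of degree $p$, with $X_\bullet$ the \v{C}ech nerve of $Y \to X$ (a valid $h$-hypercover). Then $Y \times_X X_0 \cong \coprod_{i=1}^{p} Y$ over $X_0 = Y$ is a split cover whose Galois closure has degree $1$, so it lies in $\mathrm{Fet}^{(p)}(X_0)$; yet $Y \notin \mathrm{Fet}^{(p)}(X)$. In the paper's actual applications the hypercovers arise from resolution of singularities, so $X_0 \to X$ is birational and $\pi_1(X_0) \to \pi_1(X)$ is surjective, which rules out this pathology; but for arbitrary $h$-hypercovers the $\mathrm{Fet}^{(p)}$ assertion requires more than the bookkeeping you describe.
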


In other words, specifying a finite \'etale cover of $X$ is equivalent to specifying its pullback to $X_0$ together with an isomorphism of the two pullbacks to $X_1$ that satisfy the cocycle condition over $X_2$.

\begin{proof}
This is immediate from the fact that finite \'etale morphisms satisfy effective descent for the $h$-topology \cite[\S IX, Theorem 4.12]{SGA1}.
\end{proof}

The next result is a standard ingredient in \'etale cohomology, featuring notably in the proof of the proper base change theorem.

\begin{lemma}
\label{lem:FetProperHenselian}
Let $R$ be a noetherian ring which is henselian with respect to an ideal $I$. Let $f:X \to \mathrm{Spec}(R)$ be a proper morphism with $X$ an algebraic stack. Setting $X_0 = f^{-1}(\mathrm{Spec}(R/I))$, restriction induces an equivalence
\[ \mathrm{Fet}(X) \simeq \mathrm{Fet}(X_0).\]
\end{lemma}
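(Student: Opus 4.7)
My plan is to prove the scheme case first, using a ``Hensel's lemma for proper morphisms'' argument based on properness and Grothendieck's existence theorem, and then bootstrap to algebraic stacks using the $h$-descent statement of Lemma~\ref{lem:FetCD}.

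\textbf{Scheme case.} Assume first $X$ is a scheme. For full faithfulness of $\mathrm{Fet}(X) \to \mathrm{Fet}(X_0)$, the point is that given $Y, Y' \in \mathrm{Fet}(X)$, a morphism $Y \to Y'$ over $X$ corresponds to a clopen component of the finite \'etale $X$-scheme $Y \times_X Y'$ mapping isomorphically to $Y$. Since $Y \times_X Y'$ is proper over $R$, the pair $(\Gamma(Y \times_X Y', \mathcal{O}), I \cdot \Gamma(Y \times_X Y', \mathcal{O}))$ is henselian (by properness plus henselianity of $(R,I)$), so idempotents of this ring lift uniquely from its reduction modulo $I$, giving full faithfulness. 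For essential surjectivity, the standard route is (i) reduce to the case where $R$ is $I$-adically complete, using an Artin-style approximation / rigidity argument for finite \'etale covers of proper schemes over a henselian pair; (ii) over a complete noetherian base, lift $Z_0 \to X_0$ uniquely through each nilpotent thickening $X_n := X \otimes_R R/I^{n+1}$ via topological invariance of the \'etale site; (iii) algebraize the resulting formal finite \'etale cover of $\widehat X$ via Grothendieck's existence theorem to obtain a finite \'etale cover of $X$ itself.

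\textbf{Stack case.} To extend to an algebraic stack $X$, I would choose a proper surjection $p\colon V \to X$ from a scheme $V$, which exists by Chow's lemma in the Deligne--Mumford setting and by standard generalizations (or passage through algebraic spaces, for which the scheme argument above goes through verbatim) in the Artin setting. The $2$-truncated Cech nerve $V_\bullet \to X$ is then a $2$-truncated $h$-hypercover, and each $V_i$ is proper over $R$, being proper over $X$ which is proper over $R$. Applying the scheme case termwise yields $\mathrm{Fet}(V_i) \simeq \mathrm{Fet}(V_{i,0})$ for each $i \leq 2$. By Lemma~\ref{lem:FetCD} applied to both $V_\bullet \to X$ and to its reduction $V_{\bullet, 0} \to X_0$, pullback induces
\[ \mathrm{Fet}(X) \simeq \lim \mathrm{Fet}(V_\bullet) \simeq \lim \mathrm{Fet}(V_{\bullet, 0}) \simeq \mathrm{Fet}(X_0), \]
which is the desired equivalence.

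\textbf{Main obstacle.} The only substantive step is essential surjectivity in the scheme case, i.e., actually constructing the lift of $Z_0 \to X_0$ to a finite \'etale cover of $X$. This is where properness and henselianity enter essentially, and one ultimately appeals to Grothendieck's existence theorem on the completion $\widehat X$, preceded by an approximation argument to pass from $(R,I)$ to $(\widehat R, I\widehat R)$. Full faithfulness and the reduction from algebraic stacks to schemes are formal once this core analytic input is in place.
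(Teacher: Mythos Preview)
Your approach is correct and matches the paper's proof. The paper handles the scheme case by citing \cite[Tag 0A48]{StacksProject} (whose proof runs along the lines you sketch), and then extends to algebraic stacks via Chow's lemma for stacks together with the $h$-descent of Lemma~\ref{lem:FetCD}, exactly as you propose.
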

\begin{proof}
When $X$ is a scheme, this is \cite[Tag 0A48]{StacksProject}. The case of proper stacks then follows from Lemma~\ref{lem:FetCD} and Chow's lemma for stacks, see \cite{OlssonChow} and \cite[Tag 0CQ8]{StacksProject}. 
\end{proof}

\begin{remark}
Lemma~\ref{lem:FetProperHenselian} implies that for a proper morphism $f:X \to S$ of noetherian schemes and a specialization $\eta \rightsquigarrow s$ of geometric points of $S$, we obtain an induced functor
\[ \mathrm{Fet}(X_s) \xleftarrow{\simeq} \mathrm{Fet}(X \times_S S_s^{sh}) \to \mathrm{Fet}(X_\eta).\]
We often refer to this functor (or the induced map on fundamental groups in the presence of a base point) informally as the specialization map.
\end{remark}

\subsection{Root stacks}
\label{ss:RootStacks}

Root stacks, originally introduced in \cite{Cadman}, provide an alternative and quicker route to some results traditionally formulated using logarithmic geometry. In this section, we recall some basic results in this theory. Everything that follows (except perhaps Proposition~\ref{prop:Logpi1Constructibility}) is presumably well-known to the experts. As a general reference, we use \cite[Appendix A]{LieblichOlsson}. We first introduce the objects that will be used throughout this section.

\begin{notation}
\label{not:RootStack}
Fix a prime $p$ and a  scheme $X$ over $\mathbf{Z}_{(p)}$. Fix a finite set $I$ and pairs $\{(L_i,s_i)\}_{i \in I}$ where $L_i$ is a line bundle on $X$  and $s_i :L_i\rightarrow \mathcal{O}_X$ is a morphism of $\mathcal{O}_X$-modules.  Equivalently we can view $s_i$ as an element of $H^0(X, L_i^\vee )$.   As in \cite[\S A.4]{LieblichOlsson}, each pair $(L_i,s_i)$ corresponds to a map $X \to [\mathbf{A}^1/\mathbf{G}_m]$, and scaling $s_i$ by a unit defines an isomorphic map. Taking products gives a map $X \to [\mathbf{A}^1/\mathbf{G}_m]^I$ whose $i$-th component corresponds to $(L_i,s_i)$. For each positive integer $\ell$ coprime to $p$, pulling back the map $[\mathbf{A}^1/\mathbf{G}_m]^I \to [\mathbf{A}^1/\mathbf{G}_m]^I$ given by $(z_i) \mapsto (z_i^\ell)$ gives a cover $\mathcal{X}_\ell \to X$ with $\mathcal{X}_\ell$ a Deligne-Mumford stack called the \emph{$\ell $-th root stack}; note that the former map is an isomorphism over the open substack $[\mathbf{G}_m/\mathbf{G}_m]^I \hookrightarrow [\mathbf{A}^1/\mathbf{G}_m]^I$, so $\mathcal{X}_\ell \to X$ is an isomorphism over the locus where all the $s_i$'s are isomorphisms, and thus the $s_i$'s that are isomorphisms can be ignored in the construction of $\mathcal{X}_\ell$. If $\ell \mid \ell'$, there is an evident $X$-map $\mathcal{X}_{\ell'} \to \mathcal{X}_\ell$. Write $Z_i = Z(s_i)$, $Z = \cup_i Z_i \subset X$, and let $U = X - Z$. Write $\mathcal{Z}_{i,\ell}$ for the preimage of $Z_i$ in $\mathcal{X}_\ell$, and let $\mathcal{Z}_\ell = \cup_i \mathcal{Z}_{i,\ell}$ be the preimage of $Z$. It is sometimes also convenient to generalize this notation slightly: if $J \subset I$ is a non-empty subset, then set $Z_J = \cap_{j \in J} Z_j$, and let $\mathcal{Z}_{J,\ell}$ denote its preimage.
\end{notation}

The universal example of this construction is provided by the following:

\begin{example}
\label{ex:ModelSNC}
Let $X = \mathbf{A}^n$, let $I = \{1,...,n\}$, let $L_i  = \mathcal{O}_X$, and let $s_1,...,s_n$ be the $n$ co-ordinate functions on $X$. Thus, each $Z_i \subset X$ is a co-ordinate hyperplane, and the pair $(X,\{Z_i\}_{i \in I})$ forms the prototypical example of an SNC pair, i.e.,  a regular scheme equipped with a simple normal crossings divisor. In this case, $\mathcal{X}_\ell$ is given by $[\mathbf{A}^n/\mu_\ell^{\oplus n}]$ where the co-ordinate functions are $t_1,...,t_n$, the $\mu_\ell^{\oplus n}$-action is the standard one, and the structure map $\mathcal{X}_\ell \to X$ is given by raising all co-ordinates to their $\ell$-th power. Thus, the closed substack $\mathcal{Z}_{i,\ell} \subset \mathcal{X}_\ell$ is given by the vanishing locus of $t_i^\ell$. In particular, the underlying reduced substack $(\mathcal{Z}_{i,\ell})_{red}$ is smooth divisor, and the pair $(\mathcal{X}_\ell, \{(\mathcal{Z}_{i,\ell})_{red}\}_{i \in I})$ is again an SNC pair.
\end{example}

The construction of root stacks behaves well with respect to base change:

\begin{lemma}
\label{lem:RootStackBC}
The formation of $\mathcal{X}_\ell \to X$ from $\{(L_i,s_i)\}_{i \in I}$ commutes with base change on $X$ for each $\ell$.
\end{lemma}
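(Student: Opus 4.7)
The plan is to observe that this is essentially a tautology given how $\mathcal{X}_\ell$ has been defined, namely as a $2$-fiber product of stacks, together with the functoriality of the classifying map $X \to [\mathbf{A}^1/\mathbf{G}_m]^I$.

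More precisely, let $g : X' \to X$ be a morphism of $\mathbf{Z}_{(p)}$-schemes. First I would verify that the classifying map $\phi_X : X \to [\mathbf{A}^1/\mathbf{G}_m]^I$ attached to the data $\{(L_i, s_i)\}_{i \in I}$ is compatible with pullback, in the sense that the composition $X' \xrightarrow{g} X \xrightarrow{\phi_X} [\mathbf{A}^1/\mathbf{G}_m]^I$ agrees (up to canonical isomorphism) with the classifying map $\phi_{X'}$ of the pulled-back data $\{(g^*L_i, g^*s_i)\}_{i \in I}$. This is immediate from the moduli interpretation of $[\mathbf{A}^1/\mathbf{G}_m]$ as the stack of pairs consisting of a line bundle and a global section of its dual, together with the obvious contravariant functoriality of this moduli problem in the base scheme.

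Given that, the lemma follows from the transitivity of $2$-fiber products. Indeed, writing $p_\ell : [\mathbf{A}^1/\mathbf{G}_m]^I \to [\mathbf{A}^1/\mathbf{G}_m]^I$ for the $\ell$-th power map, the definition gives $\mathcal{X}_\ell = X \times_{\phi_X,\, [\mathbf{A}^1/\mathbf{G}_m]^I,\, p_\ell} [\mathbf{A}^1/\mathbf{G}_m]^I$, so
\[
X' \times_X \mathcal{X}_\ell \;\simeq\; X' \times_{\phi_X \circ g,\, [\mathbf{A}^1/\mathbf{G}_m]^I,\, p_\ell} [\mathbf{A}^1/\mathbf{G}_m]^I \;\simeq\; X' \times_{\phi_{X'},\, [\mathbf{A}^1/\mathbf{G}_m]^I,\, p_\ell} [\mathbf{A}^1/\mathbf{G}_m]^I,
\]
and the right-hand side is by definition the $\ell$-th root stack of $X'$ with respect to the pulled-back data.

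Since the proof is just an unwinding of definitions and an application of functoriality, there is no real obstacle: everything hinges on the moduli interpretation of $[\mathbf{A}^1/\mathbf{G}_m]$, which is already invoked implicitly in Notation~\ref{not:RootStack}. If desired, one could alternatively argue directly in local coordinates using the presentation of Example~\ref{ex:ModelSNC}, but the fiber-product viewpoint makes the statement manifest and avoids any calculation.
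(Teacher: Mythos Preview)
Your argument is correct and is exactly the unwinding of ``immediate from the definition'' that the paper has in mind: the root stack is defined as a $2$-fiber product along the classifying map to $[\mathbf{A}^1/\mathbf{G}_m]^I$, so base change compatibility follows from functoriality of the moduli description and transitivity of fiber products. The paper's own proof is the single sentence ``This is immediate from the definition,'' and your write-up simply makes that sentence explicit.
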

\begin{proof}
This is immediate from the definition.
\end{proof}

Root stacks do not modify the target scheme $X$ away from the zero locus of the sections $s_i$:

\begin{lemma}
\label{lem:RootStackTrivial}
Each $\mathcal{X}_\ell \to X$ is an isomorphism over $U$.
\end{lemma}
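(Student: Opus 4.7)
My plan is to reduce the statement to the elementary fact that the $\ell$-th power map is an automorphism of the open substack $V := [\mathbf{G}_m/\mathbf{G}_m]^I$ of $Y := [\mathbf{A}^1/\mathbf{G}_m]^I$. By Lemma~\ref{lem:RootStackBC}, the formation of $\mathcal{X}_\ell$ commutes with base change, so it suffices to check that $\mathcal{X}_\ell \times_X U \to U$ is an isomorphism.

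To that end, I would unwind the definition: $\mathcal{X}_\ell = X \times_{Y, \phi, f} Y$, where $\phi : X \to Y$ is the classifying map of the family $\{(L_i,s_i)\}_{i \in I}$ and $f : Y \to Y$ is the coordinate-wise $\ell$-th power map. Over $U$, every section $s_i$ is an isomorphism of line bundles, so $\phi|_U$ factors through the open substack $V \hookrightarrow Y$. Thus matters reduce to showing that $f$ restricts to an equivalence $f|_V : V \to V$, for then
\[
\mathcal{X}_\ell \times_X U \;=\; U \times_{V, \phi|_U, f|_V} V \;\simeq\; U \times_V V \;\simeq\; U,
\]
as desired.

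The verification of this last claim is where essentially all the content lies, and it is very mild. First, $f^{-1}(V) = V$ since $z^\ell$ is a unit exactly when $z$ is. Second, the scaling action of $\mathbf{G}_m$ on itself is free and transitive, so $[\mathbf{G}_m/\mathbf{G}_m] \simeq \mathrm{pt}$, whence $V \simeq \mathrm{pt}$ and $f|_V$ is automatically an isomorphism (being a self-morphism of the terminal stack). I expect this identification $[\mathbf{G}_m/\mathbf{G}_m] \simeq \mathrm{pt}$ to be the only point requiring a moment of thought, but it is entirely routine, so no real obstacle is anticipated.
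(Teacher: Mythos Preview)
Your proof is correct and follows essentially the same approach as the paper: the paper's proof is simply a pointer back to Notation~\ref{not:RootStack}, where it is already observed that the $\ell$-th power map on $[\mathbf{A}^1/\mathbf{G}_m]^I$ is an isomorphism over $[\mathbf{G}_m/\mathbf{G}_m]^I$, and hence $\mathcal{X}_\ell \to X$ is an isomorphism over the locus where all $s_i$ are isomorphisms. You have merely spelled this out in slightly more detail (including the identification $[\mathbf{G}_m/\mathbf{G}_m] \simeq \mathrm{pt}$), and the initial appeal to Lemma~\ref{lem:RootStackBC} is harmless but unnecessary, since ``isomorphism over $U$'' already means that the base change to $U$ is an isomorphism.
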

\begin{proof}
This was already observed  above in Notation~\ref{not:RootStack}.
\end{proof}

The most important source of examples arises from a simple normal crossings divisor on a regular scheme, i.e., objects modelled on those in Example~\ref{ex:ModelSNC}.  In this case the line bundles are the ideal sheaves of the irreducible components of the divisor with the natural maps to the structure sheaf. One has:

\begin{lemma}
\label{lem:RootStackSNCD}
Assume that $(X, \{ Z_i\}_{i \in I})$ is an SNC pair, i.e., $X$ is regular, each $Z_i \subset X$ is an effective Cartier divisor that is regular, each $Z_J$ has codimension $\#J$ for non-empty subsets $J \subset I$, and the divisor $Z = \sum_i Z_i$ is reduced with simple normal crossings. Then
\begin{enumerate}
\item For each $\ell$, the pair $(\mathcal{X}_\ell, \{ (\mathcal{Z}_{i,\ell})_{red} \}_{i \in I}  )   $ is a SNC pair. 
\item (Abhyankar) The restriction functor
\[ \mathrm{Fet}^{(p)}(\mathcal{X}_\infty) := \colim_\ell \mathrm{Fet}^{(p)}(\mathcal{X}_\ell) \to \mathrm{Fet}^{(p)}(U) \]
is an equivalence.
\item For each $\ell \geq 0$, the canonical maps induce an equivalence
\[ \xymatrix{ \mathrm{Fet}(\mathcal{Z}_\ell) \ar[r]^-{\simeq} &  \lim \Big( \prod_{i \in I} \mathrm{Fet}(\mathcal{Z}_{i,\ell}) \ar@<0.5ex>[r] \ar@<-0.5ex>[r] & \prod_{i,j \in I} \mathrm{Fet}(\mathcal{Z}_{ \{i,j\}, \ell }) \ar@<0.6ex>[r] \ar@<-0.6ex>[r] \ar[r] & \prod_{i,j,k \in I} \mathrm{Fet}(\mathcal{Z}_{\{i,j,k\},\ell}) \Big),   }\]
and similarly for $\mathrm{Fet}(-)$ replaced by $\mathrm{Fet}^{(p)}(-)$. 
\end{enumerate}
\end{lemma}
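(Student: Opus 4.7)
The plan is to treat the three parts in order: (1) is a direct \'etale-local calculation, (2) is the root stack reformulation of Abhyankar's lemma, and (3) is a direct application of $h$-descent for finite \'etale covers.

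For part (1), being SNC is \'etale-local on $\mathcal{X}_\ell$, and by Lemma~\ref{lem:RootStackBC} the formation of the root stack commutes with \'etale base change on $X$. By the SNC hypothesis, around each point $x \in X$ I may, after an \'etale localisation, replace $(X, \{Z_i\})$ by the universal model of Example~\ref{ex:ModelSNC}: the sections $s_i$ with $Z_i$ missing $x$ are local units and contribute trivial factors to the root stack, while the remaining ones give local coordinates. In the model case, $\mathcal{X}_\ell = [\mathbf{A}^n/\mu_\ell^n]$ is smooth and the reduced preimages $(\mathcal{Z}_{i,\ell})_{red}$ are cut out by the $\ell$-th root coordinates, giving transverse smooth divisors; this proves (1).

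For part (2), I would establish full faithfulness and essential surjectivity of the pullback functor separately. Since each $\mathcal{X}_\ell$ is smooth by (1) and $U \subset \mathcal{X}_\ell$ is open dense, the restriction $\mathrm{Fet}^{(p)}(\mathcal{X}_\ell) \to \mathrm{Fet}^{(p)}(U)$ is fully faithful, a property preserved by the filtered colimit over $\ell$. Essential surjectivity is the real content: given a prime-to-$p$ cover $V \to U$, I would work \'etale-locally on $X$ and, by (1) and Lemma~\ref{lem:RootStackBC}, reduce to the universal model of Example~\ref{ex:ModelSNC} over a strictly henselian base. There, classical Abhyankar theory (e.g., \cite{SGA1}) identifies the tame fundamental group of the complement with $\prod_i \hat{\mathbf{Z}}^{(p)}(1)$; every finite prime-to-$p$ quotient factors through $(\mathbf{Z}/\ell)^{|I|}$ for some $\ell$ coprime to $p$, and the corresponding cover is precisely the one arising from $\mathcal{X}_\ell \to X$. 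Choosing a sufficiently divisible common $\ell$ (which exists by a noetherian refinement argument since $V \to U$ is of finite type) and invoking Lemma~\ref{lem:FetCD}, I glue these local extensions into a global cover of $\mathcal{X}_\ell$, with full faithfulness ensuring the cocycle data is canonical.

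For part (3), since $\mathrm{Fet}$ and $\mathrm{Fet}^{(p)}$ are insensitive to nilpotent thickenings, I pass to reduced structures throughout. By (1), the $(\mathcal{Z}_{i,\ell})_{red}$ form an SNC configuration in $\mathcal{X}_\ell$, so scheme-theoretic intersections of components agree with $(\mathcal{Z}_{J,\ell})_{red}$ up to nilpotents. The map $\pi : \bigsqcup_i (\mathcal{Z}_{i,\ell})_{red} \to (\mathcal{Z}_\ell)_{red}$ is proper and surjective, hence an $h$-cover, and its $2$-truncated \v{C}ech nerve in degrees $0,1,2$ unwinds precisely to the disjoint unions indexed by $|J| \leq 3$ appearing on the right-hand side of the stated equivalence. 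Applying Lemma~\ref{lem:FetCD} to this hypercover yields the claim, uniformly for $\mathrm{Fet}$ and $\mathrm{Fet}^{(p)}$. The main obstacle is part (2): parts (1) and (3) amount to local bookkeeping and a formal application of $h$-descent respectively, whereas (2) is the substantive step where Abhyankar's lemma, and therefore the prime-to-$p$ hypothesis, enters essentially.
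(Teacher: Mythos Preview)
Your proposal is correct and follows essentially the same approach as the paper: the paper simply cites \cite[\S XIII, Propositions 5.1 and 5.2]{SGA1} for (1) and (2), and applies Lemma~\ref{lem:FetCD} to the $h$-cover $\sqcup_i \mathcal{Z}_{i,\ell} \to \mathcal{Z}_\ell$ for (3), which is exactly what you have spelled out in more detail.
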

\begin{proof}
Statement (1) follows from \cite[\S XIII, Proposition 5.1]{SGA1}, while (2) results from \cite[\S XIII, Proposition 5.2]{SGA1} by passing to quotients. Finally, (3) is a consequence of $h$-descent for finite \'etale covers (i.e., Lemma~\ref{lem:FetCD}) applied to the $h$-cover $\sqcup_i \mathcal{Z}_{i,\ell} \to \mathcal{Z}_\ell$.
 \end{proof}

\begin{remark}
Concretely, Lemma~\ref{lem:RootStackSNCD} (3) asserts the following: to specify a finite \'etale cover on $\mathcal{Z}_\ell$, we must specify a finite \'etale cover on each irreducible component $\mathcal{Z}_{i,\ell}$ for $i \in I$, together with a transitive system of isomorphisms over the intersections $\mathcal{Z}_{i,\ell} \cap \mathcal{Z}_{j,\ell}$.
\end{remark}

We give an explicit example of the phenomenon being encoded in Lemma~\ref{lem:RootStackSNCD} globally.

\begin{example}
Let $Y$ be the henselization of $\mathbf{A}^2$ at $0$ over an algebraically closed field $k$ of characteristic $0$, and let $X$ be the blowup of $Y$ at $0$. Let $E \subset X$ be the exceptional divisor, so $E \simeq \mathbf{P}^1$, and $(X,E)$ is an SNC pair. Performing the root stack construction with respect to $E$ gives stacks $\mathcal{X}_\ell \to X$ for each $\ell \geq 1$. Let $\mathcal{E}_\ell \to E$ be the maximal reduced closed substack of $\mathcal{X}_\ell \times _XZ$. Let $\mathcal{E}_\infty := \lim_\ell \mathcal{E}_\ell$. Lemma~\ref{lem:RootStackSNCD} (2) and Lemma~\ref{lem:FetProperHenselian} imply that 
\[ \pi_1(\mathcal{E}_\infty) \simeq \pi_1(X-E) = \pi_1(Y - \{0\}) = 0.\]
We explain how to see this explicitly. The map $\mathcal{E}_\infty \to E$ is naturally a $\widehat{\mathbf{Z}}(1)$-gerbe. Moreover, this gerbe is non-trivial: its class in $H^2(E, \widehat{\mathbf{Z}}(1))$ is the first Chern class of $\mathcal{O}_E(1)$ (which is the conormal bundle of $E \subset X$). One can then show the following general assertion:  for a $\widehat{\mathbf{Z}}(1)$-gerbe $\mathcal{Z} \to Z$ over a simply connected normal base $Z$, the group $\pi_1(\mathcal{Z})$ identifies with the dual of the kernel of the map $\widehat{\mathbf{Z}} \to H^2(Z, \widehat{\mathbf{Z}}(1))$ classifying the gerbe. For example, when $k = \mathbf{C}$, one may use the long exact sequence on homotopy groups to arrive at this conclusion. In our situation, as the conormal bundle $\mathcal{O}_E(1)$ is ample, the corresponding map $\widehat{\mathbf{Z}} \to H^2(E, \widehat{\mathbf{Z}}(1))$ is injective, and thus $\pi_1(\mathcal{E}_\infty) \simeq 0$.
\end{example}

Using Abhyankar's lemma, one proves the invariance of prime-to-$p$ \'etale fundamental groups in the fibers of a proper smooth families. In fact, as we explain, $h$-descent techniques allow us to extend this invariance to certain ``equisingular'' families.

\begin{proposition}
\label{prop:AbhyankarNotSmooth}
Let $k$ be a $\mathbf{Z}_{(p)}$-field,  let $V$ be a henselian dvr over $k$, and let $X$ be a $V$-scheme. Assume that \'etale locally on $X$ we can find an \'etale map $X \to X_0 \otimes_k V$ for a finite type $k$-scheme $X_0$. Let $\eta \rightsquigarrow s$ be a specialization of geometric points on $\mathrm{Spec}(V)$, and write $\overline{V}$ for the integral closure of $V$ in $\eta$. Then the generic fibre functor induces an equivalence
\[\mathrm{Fet}^{(p)}(X_{\overline{V}}) \simeq \mathrm{Fet}^{(p)}(X_\eta).\]
If $X$ is proper over $V$, then the specialization map gives an equivalence
\[ \mathrm{Fet}^{(p)}(X_s) \simeq \mathrm{Fet}^{(p)}(X_\eta).\]
\end{proposition}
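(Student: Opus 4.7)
The plan is to reduce the first statement to the case where $X/V$ is smooth using $h$-descent, and in that smooth case invoke the classical Abhyankar lemma applied to the SNC pair $(X, X_s)$; the second statement then follows by combining with proper base change via Lemma~\ref{lem:FetProperHenselian}.

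For the smooth case, suppose $X/V$ is smooth, so $(X, X_s)$ is an SNC pair with $X_s$ a smooth divisor cut out by the uniformizer $\pi \in V$. Given a prime-to-$p$ finite \'etale cover $Y_\eta \to X_\eta$, it descends to $Y_{K_\alpha} \to X_{K_\alpha}$ for some finite subextension $K_\alpha \subset \eta$ with integral closure $V_\alpha \subset \overline{V}$ (a henselian DVR). Normalizing $X_{V_\alpha}$ inside $Y_{K_\alpha}$ yields a finite cover $\widetilde{Y} \to X_{V_\alpha}$ that is tamely ramified of some order $\ell$ coprime to $p$ along the smooth divisor $X_{V_\alpha, s}$. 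By the classical Abhyankar lemma (a special case of Lemma~\ref{lem:RootStackSNCD}(2)), after base change to $V_\beta := V_\alpha[t]/(t^\ell - \pi_\alpha) \subset \overline{V}$ and normalization, we obtain a finite \'etale cover of $X_{V_\beta}$; pulling back to $X_{\overline{V}}$ gives the desired extension, yielding essential surjectivity. Full faithfulness follows because $X_\eta \hookrightarrow X_{\overline{V}}$ is a dense open immersion (by flatness of $X/V$) and $X_{\overline{V}}$ is normal (as a filtered union of smooth $V_\alpha$-schemes), so finite \'etale covers of $X_{\overline{V}}$ are determined by their restriction to $X_\eta$.

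For the general case, I reduce to the smooth case by $h$-descent. Pick an \'etale cover $X' \to X$ with $X'$ \'etale over $X_0 \otimes_k V$, and take a de Jong alteration $\widetilde{X}_0 \to X_0$ with $\widetilde{X}_0/k$ smooth. The map $X'' := X' \times_{X_0 \otimes_k V}(\widetilde{X}_0 \otimes_k V) \to X$ is the composite of an \'etale map with a proper surjection, hence an $h$-cover; moreover, $X''$ is \'etale over the $V$-smooth scheme $\widetilde{X}_0 \otimes_k V$, so $X''/V$ is smooth. Iterating alterations on the fiber products yields a $2$-truncated $h$-hypercover $X''_\bullet \to X$ with each $X''_n/V$ smooth, which base changes compatibly to $\overline{V}$ and $\eta$. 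By Lemma~\ref{lem:FetCD}, both $\mathrm{Fet}^{(p)}(X_{\overline{V}})$ and $\mathrm{Fet}^{(p)}(X_\eta)$ are computed as limits of the termwise categories, so the smooth case applied levelwise proves the first statement. For the proper case, Lemma~\ref{lem:FetProperHenselian} applied to each $X_{V_\alpha}/V_\alpha$ gives $\mathrm{Fet}(X_{V_\alpha}) \simeq \mathrm{Fet}(X_{s_\alpha})$; taking the filtered colimit in $\alpha \subset \overline{V}$ yields $\mathrm{Fet}(X_{\overline{V}}) \simeq \mathrm{Fet}(X_s)$, which combined with the first statement produces $\mathrm{Fet}^{(p)}(X_s) \simeq \mathrm{Fet}^{(p)}(X_\eta)$, compatibly with the specialization functor of the remark after Lemma~\ref{lem:FetProperHenselian}.

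The principal technical obstacle I anticipate lies in the $h$-descent step: while a single smooth $h$-cover of $X$ is produced easily from de Jong's theorem, the fiber products appearing in the \v{C}ech nerve need not themselves be smooth over $V$, so further alterations are required at each level to maintain the smoothness hypothesis throughout the hypercover. One must also verify that the resulting hypercover commutes correctly with the base changes to $\overline{V}$ and $\eta$, which should follow since base change of an alteration remains proper and surjective (hence an $h$-cover) and base change of a smooth morphism remains smooth.
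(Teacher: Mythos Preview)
Your proof follows the same overall strategy as the paper: establish the smooth case via Abhyankar's lemma, reduce the general case to the smooth case by $h$-descent, and derive the proper case from Lemma~\ref{lem:FetProperHenselian}. The smooth and proper cases match the paper's argument essentially verbatim.

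The one place where the paper is cleaner is the construction of the $V$-smooth hypercover. You build a single $V$-smooth $h$-cover $X'' \to X$ and then propose to ``iterate alterations on the fiber products'' to make the higher terms $V$-smooth. The concern you flag at the end is real, and your proposed fix does not work as stated: the \v{C}ech fiber products $X'' \times_X X''$ need not be smooth over $V$, and de~Jong's theorem applied over the dvr $V$ produces only semistable alterations, not $V$-smooth ones. The paper avoids this issue by first observing that the assertion is \'etale-local on $X$, so one may assume there is a \emph{global} \'etale map $X \to X_0 \otimes_k V$. One then chooses a $2$-truncated $h$-hypercover $Y_{\bullet,0} \to X_0$ with each $Y_{i,0}$ smooth over $k$ (such hypercovers exist over a field), and base changes along $X \to X_0$: each $Y_i := Y_{i,0} \times_{X_0} X$ is \'etale over $Y_{i,0} \otimes_k V$ and hence smooth over $V$, with no further work on fiber products required. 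Your approach can be repaired in the same spirit---the fiber product $X'' \times_X X''$ is, \'etale-locally on $X$, \'etale over $(\widetilde{X}_0 \times_{X_0} \widetilde{X}_0) \otimes_k V$, so one should alter $\widetilde{X}_0 \times_{X_0} \widetilde{X}_0$ over $k$ rather than over $V$---but constructing the entire hypercover over $k$ from the outset, as the paper does, is more direct.
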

\begin{proof}
The second part follows from the first part using Lemma~\ref{lem:FetProperHenselian}. 

For the first part, assume first that $X$ is $V$-smooth. Then $X_\eta \to X_{\overline{V}}$ is a dense open immersion of normal schemes, so $\mathrm{Fet}(X_{\overline{V}}) \to \mathrm{Fet}(X_\eta)$ is fully faithful, and so the same holds for the prime-to-$p$ variant. It remains to check that any prime-to-$p$ finite \'etale cover $Y \to X_\eta$ lifts to a (necessarily prime-to-$p$) finite \'etale cover of $X_{\overline{V}}$. By finite presentation and possibly making $V$ larger, the cover $Y \to X_\eta$ arises as the base change of a prime-to-$p$ finite \'etale cover $Y_0 \to X_{\underline{\eta}}$, where $\underline{\eta} \in \mathrm{Spec}(V)$ denotes the generic point. Let $t \in V$ be a uniformizer. By Abhyankar's lemma (Lemma~\ref{lem:RootStackSNCD} (ii)) applied to $X$ and the divisor $Z(t) \subset X$, it follows that $Y_0 \to X_{\underline{\eta}}$ lifts to a finite \'etale cover of $X$, at least after replacing $X$ with $X \otimes_V V_\ell$, where $V_\ell := V[t^{\frac{1}{\ell}}]$ and $\ell$ is a large integer prime-to-$p$. Base changing along $V_\ell \to \overline{V}$ (determined by choosing an $\ell$-th root of $t$ in $\overline{V}$) then solves our problem.

For general $X$, note that the assertion of the proposition is \'etale local on $X$. Thus, we may choose an \'etale map $X \to X_0 \otimes_k V$ for a finite type $k$-scheme $X_0$. We now argue using hypercovers.  Choose a $2$-truncated $h$-hypercover $Y_{\bullet , 0} \to X_0$ with each $Y_{i, 0}$ being smooth over $k$. By base change, this gives a $2$-truncated $h$-hypercover $Y_\bullet \to X$ with each $Y_i$ being smooth over $V$. Base changing to $\overline{V}$, we have a commutative diagram of pullback maps
\[ \xymatrix{ \mathrm{Fet}^{(p)}(X_{\overline{V}}) \ar[r] \ar[d] & \lim \mathrm{Fet}^{(p)}(Y_{\bullet , \overline{V}}) \ar[d] \\
		 \mathrm{Fet}^{(p)} (X_\eta)\ar[r] & \lim \mathrm{Fet}{(p)}(Y_{\bullet , \eta})^. }\]
The horizontal maps are equivalences by Lemma~\ref{lem:FetCD}, while the right vertical map is an equivalence by the smooth case treated above. It follows that the left vertical map is also an equivalence, as wanted.
\end{proof}

The next proposition is the logarithmic version of the fact that the (prime-to-$p$) \'etale fundamental group behaves constructibly in a proper family.

\begin{proposition}
\label{prop:Logpi1Constructibility}
Let $f:X \to S$ be a proper morphism of finite type $k$-schemes, where $k$ is an algebraically closed field over $\mathbf{Z}_{(p)}$, and $X$ is as in Notation~\ref{not:RootStack}. Then there exists a stratification $\{S_\lambda\}_{\lambda \in \Lambda}$ of $S$ such that if $\eta \rightsquigarrow s$ is any specialization of geometric points of some fixed $S_\lambda$, then the specialization map induces an isomorphism 
\[ \mathrm{Fet}^{(p)}(\mathcal{X}_{\ell,s}) \simeq \mathrm{Fet}^{(p)}(\mathcal{X}_{\ell,\eta})\]
for each $\ell \geq 0$.
\end{proposition}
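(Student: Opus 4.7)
The plan is to reduce the statement to the classical smooth-proper base change for the prime-to-$p$ fundamental group via $h$-descent along a hypercover of $X$ by SNC pairs. By Noetherian induction on $S$, it suffices to produce a dense open $S^\circ \subset S$ over which the specialization maps are equivalences for every $\ell$ coprime to $p$; the stratification is then obtained by iterating this construction on the closed complement.

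Using Hironaka's resolution of singularities in characteristic zero, and de Jong's or Gabber's alterations of generic degree prime to $p$ in positive characteristic, I would construct a $2$-truncated proper hypercover $\epsilon : Y_\bullet \to X$ of schemes such that each pair $(Y_i, W_i)$ is SNC, where $W_i \subset Y_i$ is the reduced preimage of $Z = \bigcup_j Z_j$ carrying its decomposition indexed by $I$; by taking a log-resolution rather than merely a resolution, we arrange that the pulled-back section $s_j|_{Y_i}$ has reduced divisor of zeros supported on the appropriate components of $W_i$, so that the corresponding root stack construction is well-behaved. By Lemma~\ref{lem:RootStackBC}, the simplicial stack $\mathcal{Y}_{\bullet, \ell} := Y_\bullet \times_X \mathcal{X}_\ell$ obtained by forming root stacks termwise is a proper hypercover of $\mathcal{X}_\ell$ that is compatible with base change on $S$. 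Lemma~\ref{lem:FetCD} applied over $\mathcal{X}_\ell$ and over each fibre $\mathcal{X}_{\ell, t}$ yields the equivalence
$$\mathrm{Fet}^{(p)}(\mathcal{X}_{\ell, t}) \simeq \lim \mathrm{Fet}^{(p)}(\mathcal{Y}_{\bullet, \ell, t})$$
for every geometric point $t$ of $S$. Applying generic smoothness to the finitely many schemes $Y_i$ and $W_{J, i} := \bigcap_{j \in J} W_{j, i}$ (non-empty $J \subset I$, $i = 0, 1, 2$) over $S$, I would then find a dense open $S^\circ \subset S$ over which $(Y_{i, S^\circ}, W_{i, S^\circ})$ is relatively SNC over $S^\circ$. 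By Lemma~\ref{lem:RootStackSNCD}(1) applied fibrewise, each $\mathcal{Y}_{i, \ell, S^\circ} \to S^\circ$ is a smooth proper Deligne--Mumford stack for every $\ell$ coprime to $p$.

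Finally, I would invoke smooth-proper base change for the prime-to-$p$ fundamental group applied to each $\mathcal{Y}_{i, \ell, S^\circ} \to S^\circ$: for any specialization $\eta \rightsquigarrow s$ in $S^\circ$, the specialization map $\mathrm{Fet}^{(p)}(\mathcal{Y}_{i, \ell, s}) \to \mathrm{Fet}^{(p)}(\mathcal{Y}_{i, \ell, \eta})$ is an equivalence. In the scheme case this is classical (SGA 1, Expos\'e X); for DM stacks one reduces to the scheme case by choosing a smooth projective atlas (via Chow's lemma for stacks) and applying Lemma~\ref{lem:FetCD}, or alternatively one applies Proposition~\ref{prop:AbhyankarNotSmooth} after base change to the henselization of $S^\circ$ along the specialization. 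Combining this fibrewise equivalence with the $h$-descent equivalence from the previous step yields the desired conclusion over $S^\circ$. The main obstacle is the hypercover construction: arranging $Y_\bullet \to X$ to consist of SNC pairs with reduced pullback sections and a uniformly nice behaviour over $S$ is the technical heart, requiring prime-to-$p$ alterations in positive characteristic and careful log-resolution in either characteristic; the passage from schemes to DM stacks in smooth-proper base change is subsidiary and routine.
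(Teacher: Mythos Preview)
Your overall strategy---noetherian induction, $h$-descent along a hypercover by SNC pairs, then specialization on the terms---matches the paper's. The gap is the claim that a log-resolution can ``arrange that the pulled-back section $s_j|_{Y_i}$ has reduced divisor of zeros.'' This is impossible in general: if $s_j$ vanishes to order $\geq 2$ along some component of $Z_j$ (nothing in Notation~\ref{not:RootStack} excludes this), then its pullback along any alteration $Y_i \to X$ still vanishes to order $\geq 2$. Consequently Lemma~\ref{lem:RootStackSNCD}(1) does not apply, and $\mathcal{Y}_{i,\ell} = Y_i \times_X \mathcal{X}_\ell$ is typically \emph{not} smooth: already for $X = \mathbf{A}^1_k$ with $s = x^2$, the $\ell$-th root stack has \'etale atlas $\mathrm{Spec}\big(k[x,z]/(z^\ell - x^2)\big)$, which is singular for odd $\ell > 1$. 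The case $s_i = 0$ (so $Z_i = X$) is worse still, introducing the nilpotent relation $z_i^\ell = 0$. So your primary route via smooth-proper base change for the stacks $\mathcal{Y}_{i,\ell}$ does not go through.

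Your fallback---Proposition~\ref{prop:AbhyankarNotSmooth}---is indeed the right tool, and is exactly what the paper uses. But invoking it is not automatic: one must verify its hypothesis, namely that the atlas of the root stack over the henselian dvr $V$ is \'etale-locally pulled back from a $k$-scheme along $k \to V$. The paper does this by an explicit local computation. After arranging (by shrinking $S$ and passing to a finite cover) that each $Z_i$ with $s_i \neq 0$ is supported on a fixed relative SNC divisor $\sum_j D_j$ with $Z_i = \sum_j \delta_{ij} D_j$ and the multiplicities $\delta_{ij}$ constant across the family, one finds the root stack is \'etale-locally $[\mathrm{Spec}(R')/\mu_\ell^{\oplus n}]$ with $R'$ \'etale over
\[
\Big(k[x_j, z_i]\big/\big(\{z_i^\ell\}_{i \in I_0},\ \{z_i^\ell - \textstyle\prod_j x_j^{\delta_{ij}}\}_{i \in I_1}\big)\Big) \otimes_k V.
\]
This equisingularity verification---rather than smoothness---is the actual content of the specialization step, and your proposal elides it.
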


The crucial assertion above is that the stratification $\{S_\lambda\}_{\lambda \in \Lambda}$ is independent of $\ell$, and thus implies that the association $s \mapsto \colim\limits_\ell \mathrm{Fet}(\mathcal{X}_{\ell,s})$ behaves constructibly in the geometric point $s \to S$.

\begin{proof}
As the formation of $\mathcal{X}_\ell$ is compatible with base change on $X$, it suffices to find an open stratum, i.e., we are free to replace $S$ by a dense open subset by noetherian induction. Moreover, as specializations of geometric points lift along $h$-covers, we may also replace $S$ with $h$-covers (such as finite covers). 

By Lemma~\ref{lem:FetCD}, we may shrink $S$ and replace $X$ by the connected components of the terms of a truncated hypercover to reduce to the following special case: the scheme $S$ is smooth and connected, the map $f:X \to S$ is a smooth projective morphism with connected fibres, and each $Z_i$ is either effective Cartier divisor (when $s_i \neq 0$) or all of $X$ (when $s_i = 0$). Write $I = I_0 \sqcup I_1$, where $i \in I$ lies in $I_0$ if and only if $s_i = 0$. Passing to further $h$-covers of $X$, we may also assume that $W := \cup_{i \in I_1} Z_i$ is the support of a strict normal crossings divisor with irreducible components $D_j$ for $j$ in some finite index set $J$, and that each $Z_i \subset W$ is the support of a subdivisor for $i \in I_1$. Thus, for $i \in I_1$, on the geometric generic fibre $X_\eta$ of $f$, we have $Z_{i,\eta} = \sum_j \delta_{ij} D_{j,\eta}$ for suitable $\delta_{ij} \geq 0$. After possibly shrinking $S$ and passing to a finite cover, we may assume that the same holds globally on $X$, as well as on any geometric fibre $X_s$ of $X \to S$.

In this situation, fix specialization $\eta \rightsquigarrow s$ of geometric points of $S$ witnessed by a map $\mathrm{Spec}(V) \to S$ with $V$ being the absolute integral closure of a henselian dvr.  For each integer $\ell \geq 0$ that is prime-to-$p$, let $\mathcal{X}_{\ell,V} \to X_V \to\mathrm{Spec}(V)$ be the base change of $\mathcal{X}_\ell \to X \to S$ to $V$, and let $\mathcal{X}_{\ell,\eta} \to X_\eta \to \eta$ be its generic fibre. It suffices to show that pullback induces equivalences
\begin{equation}
\label{eq:RootStackSpecialize}
\mathrm{Fet}^{(p)}(\mathcal{X}_{\ell,\eta}) \xleftarrow{\simeq} \mathrm{Fet}^{(p)} (\mathcal{X}_{\ell,V})\xrightarrow{\simeq} \mathrm{Fet}^{(p)}(\mathcal{X}_{\ell,s})
\end{equation}
for each $\ell \geq 0$. In fact, Lemma~\ref{lem:FetProperHenselian} immediately implies the claim for the second map, so it suffices to show that the generic fibre functor induces an equivalence in the first map above.

We may now work \'etale locally on $X_V$. Thus, we may assume $X_V := \mathrm{Spec}(R)$ is affine and \'etale over $V[x_1,...,x_N]$, the index set $J$ of the divisors is $\{1,...,N\}$ with $D_j$ being $Z(x_j) \subset X_V$; here we allow ourselves to expand the index set $J$ of the divisors $D_j$ if necessary to include each $Z(x_j)$. The sections $s_i$ are then either $0$ (corresponding to $i \in I_0$) or of the form $u_i \prod_{j=1}^N x_j^{\delta_{ij}}$ where $u_i \in R^*$ (corresponding to $i \in I_1$). As the formation of $\mathcal{X}_{\ell,V}$ is insensitive to scaling the $s_i$'s by units, we may assume $u_i = 1$ for each $i \in I_1$. The stack $\mathcal{X}_{\ell,V}$ is then $[\mathrm{Spec}(R')/(\mu_\ell)^{\oplus n}]$, where 
\[ R' := R[z_1,...,z_n]/ \Big( \{z_i^\ell\}_{i \in I_0}, \{z_i^\ell - \prod_j x_j^{\delta_{ij}}\}_{i \in I_1} \Big),\]
with $(\mu_\ell)^{\oplus n}$-action given by $(\zeta_i) \cdot (z_i) := (\zeta_i \cdot z_i)$. But then $R'$ is \'etale over 
\[ \Big(k[x_1,...,x_N,z_1,...,z_n] / \big( \{z_i^\ell\}_{i \in I_0}, \{z_i^\ell - \prod_j x_j^{\delta_{ij}}\}_{i \in I_1} \big)\Big) \otimes_k V, \]
so Proposition~\ref{prop:AbhyankarNotSmooth} shows that the generic fibre functor gives an equivalence
\[ \mathrm{Fet}^{(p)}(R') \simeq \mathrm{Fet}^{(p)}(R'_\eta).\]
As this equivalence is functorial, it is compatible with the $(\mu_\ell)^{\oplus n}$-action, so we obtain the desired equivalence in \eqref{eq:RootStackSpecialize} by applying the functor of (homotopy) $(\mu_\ell)^{\oplus n}$-fixed points to the previous equivalence.
\end{proof}


\section{Fundamental groups of punctured tubular neighbourhoods}
\label{S:section3}

This section is only relevant for the proof of Theorem~\ref{thm:GKPIntro}, and can be skipped for readers only interested in the proof of Theorem~\ref{thm:XuIntro} by characteristic $p$ methods. We fix an algebraically closed field $k$ of characteristic $p$ (possibly $0$). Let $X$ be a finite type separated $k$-scheme,  and let $i:Z \hookrightarrow X$ be a closed subscheme; write $j:U \hookrightarrow X$ for its complement. Our goal in this section is to show that the topology of $U$ near a point $z \in Z$ behaves constructibly in $z$. To make this precise, we introduce some notation concerning ``tubes'' in $U$ of points of $Z$.

\begin{notation}
For a geometric point $z \to Z$, we write $T_z(Z)$ for the punctured tubular neighbourhood of $z$ in $X$, i.e., $T_z(Z)$ is the preimage of $U$ under $X^{sh}_z \to X$; note that this definition only requires $Z$ to be closed in $X$ in a neighbourhood of $z$, and thus makes sense for $Z$ assumed be only locally closed.  A specialization $\eta \rightsquigarrow s$ of geometric points of $Z$ defines a map $X_\eta^{sh} \to X_s^{sh}$, and hence a map $T_\eta(Z) \to T_s(Z)$, which we call the specialization map.
\end{notation}

The main result of this section is:

\begin{theorem}
\label{thm:ConstructibilityMilnorpi1}
There exists a stratification $\{Z_\lambda\}_{\lambda \in \Lambda}$ of $Z$ with the following property: for a fixed stratum $Z_\lambda$ and for any specialization $\eta \rightsquigarrow s$ of geometric points of $Z_\lambda$, the specialization map induces an equivalence
\[ \mathrm{Fet}^{(p)} (T_s(Z))\simeq \mathrm{Fet}^{(p)}(T_\eta(Z)).\]
In particular, if $X$ is geometrically unibranch and the $Z_\lambda$ are chosen to be connected (which is always possible after refining the stratification further), then we get isomorphisms
\[ \pi_1(T_{z_1}(Z))^{(p)} \simeq \pi_1(T_{z_2}(Z))^{(p)}\]
for any two geometric points $z_1,z_2$ of $Z_i$ (depending on the choice an \'etale path linking $z_1$ and $z_2$). 
\end{theorem}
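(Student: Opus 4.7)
The plan is to reduce Theorem~\ref{thm:ConstructibilityMilnorpi1} to Proposition~\ref{prop:Logpi1Constructibility} via a hypercover argument that expresses $\mathrm{Fet}^{(p)}(T_z(Z))$ fiberwise over $z \in Z$ as a categorical limit-colimit built from finite \'etale covers of root stacks on proper families over $Z$.

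First, using de Jong's alteration theorem (or Hironaka in characteristic $0$) applied to the closed subscheme $Z \subset X$, I would construct a proper $2$-truncated $h$-hypercover $\pi_\bullet \colon X_\bullet \to X$ such that each $X_i$ is regular, each $\pi_i$ is proper, and $Z'_i := (\pi_i^{-1}(Z))_{\mathrm{red}}$ is a strict normal crossings divisor in $X_i$. The SNCD data on each $X_i$ fits Notation~\ref{not:RootStack}; for each $\ell$ prime to $p$ it produces root stacks $\mathcal{X}_{i,\ell} \to X_i \to X$ which are proper over $X$.

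Next, fix a geometric point $z \to Z$. Pulling the hypercover back to $X^{sh}_z$ and restricting to the complement of $Z$ gives an $h$-hypercover $T_{z,\bullet}(Z) \to T_z(Z)$ with $T_{z,i}(Z) = X_i \times_X (X^{sh}_z \setminus Z)$. Combining $h$-descent (Lemma~\ref{lem:FetCD}), termwise Abhyankar (Lemma~\ref{lem:RootStackSNCD}(2)), commutation of filtered colimits with the finite $2$-truncated limit, and henselian proper base change (Lemma~\ref{lem:FetProperHenselian}) applied to each proper stack $\mathcal{X}_{i,\ell}$ yields
\begin{equation*}
\mathrm{Fet}^{(p)}(T_z(Z)) \;\simeq\; \colim_\ell \lim_i \mathrm{Fet}^{(p)}\bigl((\mathcal{X}_{i,\ell})_z\bigr),
\end{equation*}
where each fiber $(\mathcal{X}_{i,\ell})_z$ is the root stack of the restricted data by Lemma~\ref{lem:RootStackBC}. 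All of this is natural in $z$, so a specialization $\eta \rightsquigarrow s$ of geometric points of $Z$ acts termwise by pullback.

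Finally, I would apply Proposition~\ref{prop:Logpi1Constructibility} to each proper morphism $f_i\colon X_i \times_X Z \to Z$, equipped with the pullback root-stack data, to obtain a single $\ell$-independent stratification $\{Z_{\lambda,i}\}$ of $Z$ on which $z \mapsto \mathrm{Fet}^{(p)}((\mathcal{X}_{i,\ell})_z)$ behaves constructibly. A common refinement over $i = 0,1,2$ produces the stratification $\{Z_\lambda\}$; on each stratum the termwise specialization equivalences assemble via $\lim_i$ and $\colim_\ell$ into the required equivalence $\mathrm{Fet}^{(p)}(T_s(Z)) \simeq \mathrm{Fet}^{(p)}(T_\eta(Z))$. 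The ``in particular'' statement follows by refining to connected strata and noting that geometric unibranchness makes each $X^{sh}_z$ irreducible, hence $T_z(Z)$ connected with a well-defined $\pi_1$. The main obstacle is the initial SNCD-hypercover construction: a single proper alteration need not desingularize compatibly with $Z$, so the $h$-descent framework (allowing hypercovers) is essential. A secondary subtlety is ensuring that the $\lim$--$\colim$ swap, proper base change, and specialization maps compose coherently at the level of categories rather than isomorphism classes; this is handled automatically by the functorial content of the cited lemmas.
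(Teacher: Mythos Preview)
Your proposal is correct and follows essentially the same approach as the paper's own proof: both construct a $2$-truncated $h$-hypercover $X_\bullet \to X$ by SNC pairs over $Z$, use Lemma~\ref{lem:FetCD}, Lemma~\ref{lem:RootStackSNCD}(2), and Lemma~\ref{lem:FetProperHenselian} in sequence to rewrite $\mathrm{Fet}^{(p)}(T_z(Z))$ as a finite limit of filtered colimits of $\mathrm{Fet}^{(p)}$ of root-stack fibers over $z$, and then invoke Proposition~\ref{prop:Logpi1Constructibility} termwise on the proper maps $X_i \times_X Z \to Z$ to obtain a common stratification. The only cosmetic difference is that the paper records the answer as $\lim \colim_\ell \mathrm{Fet}^{(p)}(\mathcal{Z}_{\bullet,\ell,z})$ (phrasing the fibers as fibers of $\mathcal{Z}_{\bullet,\ell} \to Z$ rather than of $\mathcal{X}_{\bullet,\ell} \to X$, which coincide over $z \in Z$) and leaves the colimit--limit swap implicit, whereas you make the swap and the common refinement over $i=0,1,2$ explicit.
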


The main idea of the proof is to use hypercovers of $(X,Z)$ by SNC pairs to describe $\pi_1(T_z(Z))^{(p)}$ in terms of the fiber over $z$ of a certain (finite) diagram of proper schemes over $Z$; the construction of this diagram is independent of the point $z \to Z$, so we can then conclude by invoking the general constructibility for the fundamental groups of the fibers of certain proper maps of  schemes proven in Proposition~\ref{prop:Logpi1Constructibility}. 

\begin{remark} The proper setting for these results is log geometry in the sense of Fontaine, Illusie, and Kato, but to minimize notational complications, we avoid log schemes in the proof below, and work instead with the equivalent formulation in terms of root stacks, as in \S \ref{ss:RootStacks}.
\end{remark}

\begin{proof}[Proof of Theorem~\ref{thm:ConstructibilityMilnorpi1}]
Choose a truncated hypercover $\pi:X_\bullet \to X$ such that each $X_i$ is smooth, and $\pi^{-1}(Z)_{red} =: Z_\bullet \subset X_\bullet$ is a simple normal crossings divisor. Let $U_\bullet := \pi^{-1}(U)$. Associated to the divisors in $Z_\bullet$, we have a tower of root stacks $\mathcal{X}_{\bullet , \ell} \to X_\bullet$ indexed by positive integers $\ell \in \mathbf{Z}$ that are coprime to $p$, as in Notation~\ref{not:RootStack}; the restriction of this root stack to $U_\bullet$ is an isomorphism by Lemma~\ref{lem:RootStackTrivial}, while the restriction $\mathcal{Z}_{\bullet, \ell} \to Z_\bullet$ to $Z_\bullet$ identifies with the corresponding infinite root stack for the restricted divisors (viewed as pairs comprising of a line bundle and a section, possibly $0$) by Lemma~\ref{lem:RootStackBC}. This data fits into the following diagram:
\[ \xymatrix{ U_\bullet \ar[r] \ar@{=}[d] & \mathcal{X}_{\bullet , \ell} \ar[d] & \mathcal{Z}_{\bullet , \ell} \ar[d] \ar[l] \\
		 U_\bullet \ar[r] \ar[d] & X_\bullet \ar[d] & Z_\bullet \ar[d] \ar[l] \\
		 	U \ar[r] & X & Z. \ar[l]}\]
Fix a geometric point $z \to Z$, and consider the base change of the above diagram along $X_z^{sh} \to X$. This gives
\[ \xymatrix{ U_\bullet \times_U T_z(Z) \ar[r] \ar@{=}[d] & \mathcal{X}_{\bullet , \ell} \times_X X^{sh}_z \ar[d] & \mathcal{Z}_{\bullet , \ell}\times_Z Z^{sh}_z \ar[d] \ar[l] \\
		 U_\bullet  \times_U T_z(Z) \ar[r] \ar[d] & X_\bullet \times_X X^{sh}_z \ar[d] & Z_\bullet \times_Z Z^{sh}_z \ar[d] \ar[l] \\
		 	T_z(Z) \ar[r] & X^{sh}_z & Z^{sh}_z. \ar[l]}\]
Lemma~\ref{lem:FetCD} gives an equivalence
\[ \mathrm{Fet}(T_z(Z)) \simeq \lim \mathrm{Fet}(U_\bullet \times_U T_z(Z)),\]
and similarly for the prime-to-$p$ variant. Lemma~\ref{lem:RootStackSNCD} (2) gives an equivalence
\[ \mathrm{Fet}^{(p)}(U_\bullet \times_U T_z(Z)) \simeq \colim\limits_\ell \mathrm{Fet}^{(p)}(\mathcal{X}_{\bullet , \ell} \times_X X^{sh}_z).\]
Lemma~\ref{lem:FetProperHenselian} then shows
\[ \mathrm{Fet}(\mathcal{X}_{\bullet , \ell} \times_X X^{sh}_z) \simeq \mathrm{Fet}(\mathcal{X}_{\bullet , \ell, z}) \simeq \mathrm{Fet}(\mathcal{Z}_{\bullet,\ell,z}),\]
and similarly for the prime-to-$p$ variant. Combining these, we learn that 
\[ \mathrm{Fet}^{(p)}(T_z(Z)) \simeq \lim \colim_\ell \mathrm{Fet}^{(p)}(\mathcal{Z}_{\bullet , \ell,z}).\]
Applying Proposition~\ref{prop:Logpi1Constructibility} to each term of the map $Z^i \to Z$ (with line bundles and sections as above) then proves the desired constructibility.
\end{proof}

The following variant of Theorem~\ref{thm:ConstructibilityMilnorpi1} concerns slightly larger ``tubes'' arounds points of $Z$, and is recorded here convenience of reference. 


\begin{theorem}\label{T:variant} 
There exists a stratification $\{Z_\lambda \}_{\lambda \in \Lambda }$ of $Z$ with the following property: for a fixed stratum $Z_\lambda$ and for any specialization $\eta \rightsquigarrow s$ of geometric points of $Z_\lambda$, the specialization map induces an equivalence
\[ \mathrm{Fet}^{(p)}(T_s(Z_\lambda )) \simeq \mathrm{Fet}^{(p)}(T_\eta(Z_\lambda )).\]
In particular, if $X$ is geometrically unibranch and the $Z_\lambda$ are chosen to be connected (which is always possible after refining the stratification further), then we get isomorphisms
\[ \pi_1(T_{z_1}(Z_\lambda ))^{(p)} \simeq \pi_1(T_{z_2}(Z_\lambda ))^{(p)}\]
for any two geometric points $z_1,z_2$ of $Z_\lambda $ (depending on the choice an \'etale path linking $z_1$ and $z_2$).
\end{theorem}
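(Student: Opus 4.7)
The approach is to reduce Theorem~\ref{T:variant} to Theorem~\ref{thm:ConstructibilityMilnorpi1} via a preliminary observation about tubes around locally closed subsets, combined with Noetherian induction on $\dim Z$. The key preliminary observation is that for any locally closed subscheme $W \subset X$ and any geometric point $z$ of $W$, one has $T_z(W) = T_z(\overline{W})$ as subschemes of $X_z^{sh}$: since $W$ is locally closed and $z \in W$, there is a Zariski open $V \subset X$ containing $z$ with $W = \overline{W} \cap V$, and $X_z^{sh}$ factors through $V$, so the preimages of $W$ and $\overline{W}$ in $X_z^{sh}$ coincide. This reduces the computation of tubes around locally closed subsets to tubes around their closures, where Theorem~\ref{thm:ConstructibilityMilnorpi1} applies directly.

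Granted this, I proceed by induction on $\dim Z$, the base case $\dim Z = 0$ being trivial. For $\dim Z > 0$, let $Y_1, \ldots, Y_k$ be the irreducible components of $Z$, each closed in $X$. For each $i$, I would apply Theorem~\ref{thm:ConstructibilityMilnorpi1} to $(X, Y_i)$ to obtain a stratification of $Y_i$ on which $z \mapsto \mathrm{Fet}^{(p)}(T_z(Y_i))$ is locally constant. The stratum $V_i \subset Y_i$ containing the generic point of $Y_i$ is dense and locally closed, hence open in the irreducible scheme $Y_i$. Setting $V_i^\circ := V_i \setminus \bigcup_{j \neq i} Y_j$ produces pairwise disjoint subsets that are open in $Z$ and still dense in their respective $Y_i$'s, so $\overline{V_i^\circ} = Y_i$. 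By the preliminary observation, $T_z(V_i^\circ) = T_z(Y_i)$ for all $z \in V_i^\circ$, hence $\mathrm{Fet}^{(p)}(T_z(V_i^\circ))$ is constant on $V_i^\circ$, and each $V_i^\circ$ qualifies as a top-dimensional stratum of the desired stratification.

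The complement $Z' := Z \setminus \bigsqcup_i V_i^\circ$ is a closed subscheme of $Z$ of strictly smaller dimension, being contained in $\bigcup_i (Y_i \setminus V_i) \cup \bigcup_{i \neq j} (Y_i \cap Y_j)$. By the inductive hypothesis, $Z'$ admits a stratification with the desired property; concatenating with $\{V_i^\circ\}_i$ yields the required stratification of $Z$. The final assertion concerning connected strata and the induced isomorphisms of $\pi_1$'s follows formally by refining into connected components.

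The main point of the argument is the preliminary observation $T_z(W) = T_z(\overline{W})$: without it, one would have to iterate Theorem~\ref{thm:ConstructibilityMilnorpi1} after each refinement (since a priori $T_z(V_i^\circ)$ differs from $T_z(Y_i)$), and termination of the iteration would need separate justification. With the observation in hand, a single application of Theorem~\ref{thm:ConstructibilityMilnorpi1} per irreducible component closes the top-dimensional step, and the inductive framework handles the lower-dimensional boundary.
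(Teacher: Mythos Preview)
Your proof is correct and follows essentially the same approach as the paper: noetherian induction on $\dim Z$, using Theorem~\ref{thm:ConstructibilityMilnorpi1} to handle a dense open stratum at each step. The main difference is cosmetic --- you make explicit the identification $T_z(W) = T_z(\overline{W})$ (which the paper leaves implicit when it takes $Q$ to be the dense open stratum from Theorem~\ref{thm:ConstructibilityMilnorpi1} and asserts the required property for $T_z(Q)$), and you work component by component rather than with a single dense open, but neither change affects the substance of the argument.
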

\begin{proof}
We proceed by induction on the dimension of $Z$.  The base case $\text{dim}(Z) = 0$ is immediate as there are no specializations in $Z$. For the inductive step, it suffices to find a dense open subset $Q\subset Z$ with the following property: for every specialization $\eta \rightsquigarrow s$ of geometric points of $Q$ the specialization map induces an equivalence
\[ \mathrm{Fet}^{(p)}(T_s(Q)) \simeq \mathrm{Fet}^{(p)}(T_\eta(Q )).\]
But then we may simply take $Q$ to be the dense open stratum stratum of $Z$ provided by Theorem~\ref{thm:ConstructibilityMilnorpi1}.
\end{proof}

\begin{remark}
\label{rmk:ConstructibilityCohomology}
The arguments of this section can also be adapted to show that the prime-to-$p$ \'etale homotopy type of $T_z(Z)$ behaves constructibly in $z$.  This is not used in what follows, however, and we do not discuss it further.
\end{remark}


\section{Local fundamental groups in arithmetic families}\label{S:section4}

The goal of this section is to show that local fundamental groups of singularities behave constructibly in arithmetic families. In particular, prime-to-$p$ quotients of local fundamental groups of singularities in characteristic $0$ can be calculated using characteristic $p$ methods for most $p \gg 0$. The precise result is:

\begin{theorem}\label{thm:constructibility}
Let $A$ be a finitely generated $\mathbf{Z}$-algebra equipped with an embedding $A \hookrightarrow \mathbf{C}$. Fix a pointed finite type $A$-scheme $(Z_A,z_A)$, and let $(Z,z)$ denote the corresponding pointed scheme over $\mathbf{C}$. There exists a dense open $U \subset \mathrm{Spec}(A)$ with the following property: for every map $\mathrm{Spec}(\kappa) \to U$ with $\kappa$ an algebraically closed field of characteristic $p$, we have a canonical equivalence
\[ \mathrm{Fet}^{(p)}(Z_z^{sh, \circ}) \simeq \mathrm{Fet}^{(p)}(Z^{sh, \circ}_{\kappa,z_{\kappa}}),\]
where $(Z_\kappa,z_\kappa)$ is the base change of $(Z_A,z_A)$ along $A \to \kappa$. If $Z$ is geometrically unibranch near $z$, then after possibly shrinking on $U$ we can further arrange that $Z_\kappa $ is unibranch near $z_\kappa $ for all $\mathrm{Spec}(\kappa )\rightarrow U$, in which case the  previous conclusion can be interpreted as an isomorphism
\[ \pi_1^{loc}(Z,z)^{(p)} \simeq \pi_1^{loc}(Z_\kappa,z_\kappa)^{(p)}.\]
\end{theorem}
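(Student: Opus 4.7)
The strategy is to spread out the hypercover argument underlying Theorem~\ref{thm:ConstructibilityMilnorpi1} over the arithmetic base $\mathrm{Spec}(A)$. First, after shrinking $\mathrm{Spec}(A)$, I would arrange that $z_A: \mathrm{Spec}(A) \hookrightarrow Z_A$ is a closed section. Then, starting from a log resolution of $(Z, z)$ over $\mathbf{C}$ (via Hironaka) and descending by generic flatness and spreading out, I would construct, after possibly shrinking $\mathrm{Spec}(A)$ further, a $2$-truncated proper $h$-hypercover $\pi_A: X_{\bullet, A} \to Z_A$ (in a neighborhood of $z_A$) such that each $X_{i, A}$ is smooth over $A$ and the preimage $Z_{i, A} := \pi_A^{-1}(z_A)_{\mathrm{red}} \subset X_{i, A}$ is a relative SNC divisor over $A$. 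In particular $Z_{\bullet, A} \to \mathrm{Spec}(A)$ is proper, with geometric fibers the SNC divisors of the corresponding hypercovers of $(Z_s, z_s)$.

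With this spreading out in place, for each geometric point $s \to \mathrm{Spec}(A)$ of residue characteristic $p_s$, the fiberwise hypercover analysis of Theorem~\ref{thm:ConstructibilityMilnorpi1}---combining Lemmas~\ref{lem:FetCD}, \ref{lem:FetProperHenselian}, and \ref{lem:RootStackSNCD}(2) with base change for root stacks (Lemma~\ref{lem:RootStackBC})---yields a canonical equivalence
\[ \mathrm{Fet}^{(p_s)}(Z^{sh, \circ}_{s, z_s}) \simeq \lim_\bullet \colim_\ell \mathrm{Fet}^{(p_s)}(\mathcal{Z}_{\bullet, \ell, A, s}), \]
where $\mathcal{Z}_{\bullet, \ell, A} \to Z_{\bullet, A}$ denotes the $\ell$-th root stack along the SNC divisor decomposition. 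These equivalences are natural in $s$ and compatible with specializations in $\mathrm{Spec}(A)$.

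It then remains to show that the right-hand side is, up to equivalence, constant on a dense open $U \subset \mathrm{Spec}(A)$. For this, I would establish an arithmetic variant of Proposition~\ref{prop:Logpi1Constructibility} applied to $Z_{\bullet, A} \to \mathrm{Spec}(A)$ with its relative SNC divisor structure: after further shrinking, for any specialization $\eta \rightsquigarrow s$ of geometric points of $U$, the specialization map induces an equivalence $\mathrm{Fet}^{(p_s)}(\mathcal{Z}_{\bullet, \ell, A, \eta}) \simeq \mathrm{Fet}^{(p_s)}(\mathcal{Z}_{\bullet, \ell, A, s})$ for every $\ell$ coprime to $p_s$. The proof follows the template of Proposition~\ref{prop:Logpi1Constructibility}: after globalizing the SNC intersection combinatorics (multiplicities $\delta_{ij}$) by passing to finite covers and shrinking $\mathrm{Spec}(A)$, any such specialization is analyzed through a henselian dvr $V$ arising from $\eta \rightsquigarrow s$, and one reduces to the $V$-smooth case of Proposition~\ref{prop:AbhyankarNotSmooth}---available here because $X_{\bullet, A} \otimes_A V$ is smooth over $V$, so the Abhyankar step of that proof applies directly to the special-fiber divisor. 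Specializing from the geometric generic point $\mathrm{Spec}(\mathbf{C}) \to \mathrm{Spec}(A)$ to any given $\mathrm{Spec}(\kappa) \to U$ through a suitable dvr then gives the desired canonical equivalence $\mathrm{Fet}^{(p)}(Z_z^{sh, \circ}) \simeq \mathrm{Fet}^{(p)}(Z^{sh, \circ}_{\kappa, z_\kappa})$. The final statement on the unibranch case follows by shrinking $U$ further, using constructibility of the geometrically unibranch property (EGA IV).

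The main obstacle is the arithmetic extension of Proposition~\ref{prop:Logpi1Constructibility}: the residue characteristic varies across $\mathrm{Spec}(A)$, whereas the original statement fixes a single-characteristic base. This is manageable because the proof is local along a henselian dvr $V$ (where the characteristic is fixed) and ultimately reduces to the $V$-smooth case of Proposition~\ref{prop:AbhyankarNotSmooth}, whose hypotheses are automatically verified for $X_{\bullet, V}$ since $X_{\bullet, A}$ was arranged to be smooth over $A$.
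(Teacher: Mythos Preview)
Your proposal is correct and follows essentially the same approach as the paper: spread out an SNC hypercover over $\mathrm{Spec}(A)$, express $\mathrm{Fet}^{(p)}$ of the punctured henselization at each geometric point as a (filtered colimit of a finite) limit over root stacks of the exceptional SNC divisor via Lemmas~\ref{lem:FetCD}, \ref{lem:RootStackSNCD}(2), and \ref{lem:FetProperHenselian}, and then compare fibers using Proposition~\ref{prop:AbhyankarNotSmooth}.

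The only organizational difference is that the paper bypasses your proposed ``arithmetic variant of Proposition~\ref{prop:Logpi1Constructibility}'': since the exceptional divisor $D_{\bullet,A}$ is already arranged to be relative SNC, the paper invokes Lemma~\ref{lem:RootStackSNCD}(iii) to decompose $\mathrm{Fet}^{(p)}(\mathcal{D}_{\bullet,\ell})$ directly as a finite limit over the strata $\mathcal{D}_{\bullet,J,\ell}$, each of which has smooth proper reduced structure over $A$ by Lemma~\ref{lem:RootStackSNCD}(i), and then applies Proposition~\ref{prop:AbhyankarNotSmooth} to these smooth proper pieces. Your detour through the template of Proposition~\ref{prop:Logpi1Constructibility} (globalizing multiplicities $\delta_{ij}$, passing to further covers) is unnecessary here---that machinery exists to reduce a non-SNC situation to an SNC one, but you already have SNC---though it does no harm and leads to the same endpoint.
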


The main idea of the proof is to use $h$-hypercovers by simple normal crossings pairs to describe $\mathrm{Fet}^{(p)}(Z_z^{sh,\circ})$ in terms of certain related smooth proper schemes over $z$, similar to what was done in the proof of Theorem~\ref{thm:ConstructibilityMilnorpi1}. This description is compatible with ``spreading out'' over $\mathrm{Spec}(A)$, so we can then end by invoking the invariance of $\mathrm{Fet}^{(p)}(-)$ under specialization to characteristic $p$ for smooth proper varieties.

\begin{proof}
Using resolution of singularities (or alterations), we may choose a truncated proper hypercover $f:X_\bullet \to Z$ indexed by $\bullet \in \Delta_{\leq 2}^{\mathrm{op}}$ with $X_i$ smooth, and $D_\bullet := f^{-1}(z)_{red} \subset X_\bullet$ giving an SNC divisor at each level; let $I_\bullet$ be the finite index set of components of $D_\bullet$, so, for any $i \in I_\bullet$, the component $D_{\bullet, i}$ is a smooth proper variety over $\mathbf{C}$. Let $U_\bullet = X_\bullet - D_\bullet = f^{-1}(Z - \{z\})$ be the complement. Let $\mathcal{X}_{\bullet , \ell} \to X_\bullet$ be the $\ell$-th root stack associated to the divisors in $D_\bullet$, and let $\mathcal{D}_{\bullet , \ell} \to D_\bullet$ be its pullback to $D$. Finally, we base change everything along $Z_z^{sh} \to Z$, so $U^{sh}_{\bullet , z} := U_\bullet \times_Z Z^{sh}_z$, etc.. Then we have canonical equivalences
\[ \mathrm{Fet}(Z_z^{sh,\circ}) \xrightarrow{\simeq} \lim \mathrm{Fet}(U^{sh}_{\bullet, z}) \xleftarrow{\simeq} \lim \colim\limits_\ell \mathrm{Fet}(\mathcal{X}_{\bullet , \ell,z}^{sh}) \xrightarrow{\simeq} \lim \colim\limits_\ell \mathrm{Fet}(\mathcal{D}_{\bullet, \ell}) \simeq \colim_\ell \lim \mathrm{Fet}(\mathcal{D}_{\bullet, \ell})\]
induced by the relevant pullback maps. Here the limit takes place over $\bullet \in \Delta_{\leq 2}$, the first isomorphism arises from Lemma~\ref{lem:FetCD}, the second from Lemma~\ref{lem:RootStackSNCD} (ii),  the third from Lemma~\ref{lem:FetProperHenselian}, and the last from the commutation of filtered colimits with finite limits. Using Lemma~\ref{lem:RootStackSNCD} (iii) to simplify the last term, we can describe $\mathrm{Fet}(Z_z^{sh,\circ})$ as
\[
\xymatrix{ \colim\limits_\ell \lim \Big( \prod_{i \in I_\bullet} \mathrm{Fet}(\mathcal{D}_{\bullet, i,\ell}) \ar@<0.5ex>[r] \ar@<-0.5ex>[r] & \prod_{i,j \in I_\bullet} \mathrm{Fet}(\mathcal{D}_{ \bullet ,\{i,j\}, \ell }) \ar@<0.6ex>[r] \ar@<-0.6ex>[r] \ar[r] & \prod_{i,j,k \in I_\bullet} \mathrm{Fet}(\mathcal{D}_{\bullet, \{i,j,k\},\ell}) \Big),   }
\]
where the limit is now over $\Delta_{\leq 2} \times \Delta_{\leq 2}$ and is computed by first limiting over the displayed $\Delta_{\leq 2}$-diagram of categories while holding $\bullet$ fixed, and then limiting over $\bullet \in \Delta_{\leq 2}$. In particular, this gives a description of the category of finite \'etale covers over $Z_z^{sh,\circ}$ in terms of the categories of finite \'etale covers over the smooth proper stacks $(D_{\bullet, J,\ell})_{red}$ for $J \subset I_\bullet$ with $1 \leq \#J \leq 3$, and the same also holds if we use $\mathrm{Fet}^{(p)}(-)$ instead of $\mathrm{Fet}(-)$ for any fixed prime $p$.  

We now specialize. By possibly changing our choice of $f$ and shrinking $\mathrm{Spec}(A)$ if necessary, we can spread $f$ out to a truncated $h$-hypercover $f_A:X_{\bullet , A} \to Z_A$ with $X_{i, A}$ smooth over $A$, and $D_{\bullet ,A} := f^{-1}(z_A)_{red} \subset X_{\bullet , A}$ giving an SNC divisor in $X_{\bullet, A}$ in every fibre over $\mathrm{Spec}(A)$ with components still indexed by the same $I_\bullet$ as in the previous paragraph. We may then form the root stacks $\mathcal{D}_{\bullet, A,\ell} \to D_{\bullet , A}$ as above. Fix a geometric point $a_\kappa:\mathrm{Spec}(\kappa) \to \mathrm{Spec}(A)$ with $\kappa$ having characteristic $p$, and replace the subscript $A$ with the subscript $\kappa$ to denote the passage to the fibre above this point. 
The analysis in the first paragraph above works directly to describe $\mathrm{Fet}^{(p)}(Z_{\kappa,z_{\kappa}}^{sh,\circ})$ as
\[
\xymatrix{ \colim\limits_\ell \lim\limits_{\Delta_{\leq 2} \times \Delta_{\leq 2}} \Big( \prod_{i \in I_\bullet} \mathrm{Fet}^{(p)}(\mathcal{D}_{\bullet, \kappa,i,\ell}) \ar@<0.5ex>[r] \ar@<-0.5ex>[r] & \prod_{i,j \in I_\bullet} \mathrm{Fet}^{(p)}(\mathcal{D}_{\bullet,  \kappa, \{i,j\}, \ell }) \ar@<0.6ex>[r] \ar@<-0.6ex>[r] \ar[r] & \prod_{i,j,k \in I_\bullet} \mathrm{Fet}^{(p)}(\mathcal{D}_{\bullet, \kappa, \{i,j,k\},\ell}) \Big).   }
\]
As specialization maps for smooth proper varieties are functorial, we therefore get a specialization map
\[ \mathrm{Fet}^{(p)}(Z_z^{sh,\circ}) \xleftarrow{} \mathrm{Fet}^{(p)}(Z_{\kappa,z_\kappa}^{sh,\circ}),\]
which we claim is an equivalence.  To verify this last assertion, it is 
 enough to check that the speciaization map gives an equivalence
\[ \mathrm{Fet}^{(p)}(\mathcal{D}_{\bullet, J,\ell}) \xleftarrow{\simeq} \mathrm{Fet}^{(p)}(\mathcal{D}_{\bullet, \kappa,J,\ell}) \]
for any non-empty subset $J$. This follows from Proposition~\ref{prop:AbhyankarNotSmooth} as $\big(\mathcal{D}_{\bullet, A,J,\ell}\big)_{red}$ is smooth and proper over $A$ for any non-empty subset $J$ by Lemma~\ref{lem:RootStackSNCD} (i). 
\end{proof}

The following proposition is implicitly proven above.

\begin{proposition}
\label{prop:Localpi1invariance}
Let $X$ be a scheme of finite type over an algebraically closed field $K$ of characteristic $p \geq 0$. Let $x \to X$ be a geometric point, and let $L/K$ be be an extension of algebraically closed fields. Choose a geometric point $x_L \to X_L$ lying above $x$. Then pullback induces an equivalence
\[ \mathrm{Fet}^{(p)}(X_x^{sh,\circ}) \simeq \mathrm{Fet}^{(p)}(X_{L,x_L}^{sh,\circ}).\]
\end{proposition}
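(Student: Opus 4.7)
The plan is to reuse the hypercover argument from the proof of Theorem~\ref{thm:constructibility} essentially verbatim, now comparing the geometric points $\mathrm{Spec}(K) \to \mathrm{Spec}(K)$ and $\mathrm{Spec}(L) \to \mathrm{Spec}(K)$ directly rather than spreading out to a finitely generated subring. Indeed, the remark ``implicitly proven above'' suggests that the same chain of equivalences applies in this simpler setting.

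First I would choose, using resolution of singularities (or alterations in positive characteristic), a $2$-truncated proper hypercover $f: X_\bullet \to X$ indexed by $\Delta_{\leq 2}^{\mathrm{op}}$ with each $X_i$ smooth over $K$ and with $D_\bullet := f^{-1}(x)_{red}$ a simple normal crossings divisor at each level, whose irreducible components are indexed by finite sets $I_\bullet$. For each $\ell$ coprime to $p$, form the $\ell$-th root stacks $\mathcal{X}_{\bullet, \ell} \to X_\bullet$ associated to these divisors, and let $\mathcal{D}_{\bullet, J, \ell}$ denote the preimage in $\mathcal{X}_{\bullet, \ell}$ of $\cap_{j \in J} D_{\bullet, j}$ for a non-empty $J \subset I_\bullet$. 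Combining Lemmas~\ref{lem:FetCD}, \ref{lem:RootStackSNCD}(2), \ref{lem:FetProperHenselian}, and \ref{lem:RootStackSNCD}(3) in the same way as in the proof of Theorem~\ref{thm:constructibility} yields a canonical description of $\mathrm{Fet}^{(p)}(X_x^{sh, \circ})$ as a $\colim_\ell$ of limits (taken over $\Delta_{\leq 2} \times \Delta_{\leq 2}$) of products of the categories $\mathrm{Fet}^{(p)}(\mathcal{D}_{\bullet, J, \ell})$ with $1 \leq \#J \leq 3$.

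Next I would base-change the entire construction to $L$. This produces a parallel $2$-truncated proper hypercover $X_{\bullet, L} \to X_L$ whose levels remain smooth and whose divisors $D_{\bullet, L}$ remain SNC with the same index sets $I_\bullet$; the associated root stacks identify with the base changes $\mathcal{X}_{\bullet, \ell, L}$ by Lemma~\ref{lem:RootStackBC}. The same formula then describes $\mathrm{Fet}^{(p)}(X^{sh, \circ}_{L, x_L})$ with each $\mathcal{D}_{\bullet, J, \ell}$ replaced by $\mathcal{D}_{\bullet, J, \ell, L}$, and the natural pullback functors assemble into a comparison between the two descriptions. The proposition therefore reduces to showing that for every non-empty $J$ the pullback
\[ \mathrm{Fet}^{(p)}(\mathcal{D}_{\bullet, J, \ell}) \to \mathrm{Fet}^{(p)}(\mathcal{D}_{\bullet, J, \ell, L}) \]
is an equivalence.

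The main technical point — and the only input beyond what is already used for Theorem~\ref{thm:constructibility} — is this final invariance. By Lemma~\ref{lem:RootStackSNCD}(1) each $(\mathcal{D}_{\bullet, J, \ell})_{red}$ is a smooth proper Deligne–Mumford stack over $K$, so what is needed is the classical fact that the prime-to-$p$ \'etale fundamental group of such a stack is invariant under extension of the algebraically closed base field. This can be handled by a standard limit argument: reduce to the case where $L/K$ is finitely generated, spread $L$ out as the function field of a smooth $K$-variety $S$, join a $K$-point of $S$ to a geometric generic point of $S$ by a henselian DVR, and apply Proposition~\ref{prop:AbhyankarNotSmooth} along this DVR to the (smooth proper) family obtained by base change. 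I expect the bookkeeping in this final reduction step, rather than any conceptual difficulty, to be the bulk of the work.
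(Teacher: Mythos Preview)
Your proposal is correct and follows essentially the same approach as the paper, which likewise reduces via the hypercover description from Theorem~\ref{thm:constructibility} to invariance of $\mathrm{Fet}^{(p)}$ for smooth proper stacks under extension of algebraically closed base fields, handled (as you suggest) through Proposition~\ref{prop:AbhyankarNotSmooth}. The paper's proof makes one point explicit that you gloss over: when the geometric point $x$ does not lie over a closed point of $X$, the diagram of smooth proper stacks lives over $\overline{\kappa(\underline{x})}$ rather than over $K$ (and one must note that $\underline{x}$ has a unique lift to $X_L$), but this is a minor refinement and your argument goes through unchanged with this adjustment.
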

\begin{proof}
Let $\underline{x} \in X$ denotes the scheme-theoretic point defined by $x$, and similarly for $\underline{x_L} \in X_L$, so $\underline{x}_L$ is the unique point of $X_L$ lifting $\underline{x}$ (where we have uniqueness as base change along an extension of algebraically closed fields preserves irreducibility).  As in the proof of Theorem~\ref{thm:constructibility} or Theorem~\ref{thm:ConstructibilityMilnorpi1}, one has
\[\mathrm{Fet}^{(p)}(X_x^{sh,\circ}) \simeq \lim \mathrm{Fet}^{(p)}(Z_{\bullet, \overline{\kappa(\underline{x})}}),\]
where $Z_\bullet$ is a certain finite diagram of smooth proper stacks over $\kappa(\underline{x})$, and the algebraic closure $\overline{\kappa(\underline{x})}$ is chosen using the geometric point $x$. The formation of this diagram is compatible with base change along $K \to L$, so we are done as 
\[\mathrm{Fet}^{(p)}(Z_{\overline{\kappa(\underline{x})}} ) \simeq \mathrm{Fet}^{(p)}(Z_{\overline{\kappa(\underline{x_L})}})\]
for smooth proper stacks $Z$ over $\kappa(x)$ (essentially by Proposition~\ref{prop:AbhyankarNotSmooth}).
\end{proof}

\section{A criterion for finiteness of a profinite group}
\label{sec:ProfiniteGroup}

In this section, we record the following criterion for the finiteness of a profinite group $G$ in terms of finiteness of its prime-to-$p$ quotients $G^{(p)}$.

\begin{proposition}
\label{prop:ProfiniteGroupFinitenessCriterion}
Let $G$ be a profinite group. Assume that for all but finitely many prime numbers $p$, the group $G^{(p)}$ is finite with order bounded above by a polynomial in $p$. Then $G$ is finite.
\end{proposition}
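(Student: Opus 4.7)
The plan is to argue by contradiction: assume $G$ is infinite and derive a contradiction by exhibiting large finite quotients of $G$ that are forced to factor through $G^{(p)}$ for an uncomfortably small prime $p$.

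The first step is a soft preliminary: any infinite profinite group admits continuous finite quotients of arbitrarily large order. If the orders of the finite continuous quotients were bounded, a quotient of maximal order would dominate every other one (any two finite quotients are jointly dominated by their image in the product, which is itself a finite quotient of $G$), forcing $G$ to coincide with that maximal quotient and hence to be finite.

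The second step is a number-theoretic ingredient. Let $S$ be the finite set of ``bad'' primes excluded in the hypothesis, let $B = \max S$, and let $C, c$ be constants so that $|G^{(p)}| \leq Cp^c$ for every prime $p \notin S$. I would use Chebyshev's estimate $\theta(x) := \sum_{q \leq x} \log q \geq c_1 x$ for $x \geq 2$ to show that for every sufficiently large $N$ there is a prime $p \notin S$ with $p \nmid N$ and $p \leq K \log N$, where $K$ depends only on $B$ and $c_1$: indeed, if every prime $p$ with $B < p \leq x$ divided $N$, then $\prod_{B < q \leq x} q$ would divide $N$, forcing $c_1 x - \theta(B) \leq \log N$.

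With these two ingredients in hand the conclusion is immediate. Pick $N$ huge; produce a finite quotient $Q$ of $G$ with $|Q| \geq N$ via the first step; produce a prime $p \notin S$ with $p \nmid |Q|$ and $p \leq K \log |Q|$ via the second. Since $|Q|$ is coprime to $p$, the quotient $Q$ is a continuous finite prime-to-$p$ quotient of $G$ and hence factors through $G^{(p)}$, giving $|Q| \leq |G^{(p)}| \leq Cp^c \leq CK^c(\log |Q|)^c$, which is absurd once $|Q|$ is large. The main obstacle, conceptually, is the Chebyshev-type input: once one can always find a prime coprime to $N$ of size $O(\log N)$, the polynomial growth of $|G^{(p)}|$ becomes incompatible with $|Q|$ growing without bound, and in fact any subexponential growth rate of $|G^{(p)}|$ would suffice for the same argument.
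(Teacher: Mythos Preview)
Your argument is correct and shares the paper's core strategy: assume $G$ infinite, produce a finite quotient $Q$ of arbitrarily large order, locate a prime $p\notin S$ with $p\nmid |Q|$ that is small relative to $|Q|$, and contradict $|Q|\leq |G^{(p)}|\leq Cp^{c}$. The difference lies in the number-theoretic step. The paper builds a tower of finite quotients, passes to a subtower where the number $k$ of distinct prime divisors of $|Q|$ is unbounded, bounds $|Q|\geq (k/e)^{k}$ via Stirling, and then bounds the smallest missing prime by the prime number theorem as being among the first $k+\#S+1$ primes, hence $O(k\log k)$. Your route is more direct: Chebyshev's estimate $\theta(x)\geq c_{1}x$ already forces a prime $p\leq K\log|Q|$ with $p>\max S$ and $p\nmid |Q|$, so you get $|Q|\leq CK^{c}(\log|Q|)^{c}$ immediately, bypassing the tower construction, Stirling's formula, and the prime number theorem. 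Your version is the more elementary packaging of the same idea, and both make transparent (as you note and as the paper remarks) that any subexponential bound on $|G^{(p)}|$ would suffice.
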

\begin{proof}
Let $S$ denote the finite set of primes where $G^{(p)}$ is not known to be finite. Fix a positive integer $c$ such that $\#G^{(p)} \leq p^c$ for all $p$ not in $S$.  Assume towards contradiction that $G$ is infinite.

First, as $G$ is infinite, we may choose a tower $\{... \to G_{n+1} \xrightarrow{\psi_n} G_n \to ... \to G_0\}$ of finite quotients of $G$ such that each map $\psi_n$ is surjective with non-trivial kernel. To see this, let $\psi:G \to H$ be a fixed finite quotient of $G$. Then, since $G$ is infinite, there exists another finite quotient $\psi':G \to H'$ that does not factor through $G \to H$. Let $H''$ be the image of the induced map $(\psi,\psi'):G \to H \times H'$. Then $H''$ is a finite quotient of $G$, and the induced projection map $H'' \to H$ is surjective (as $G$ maps onto both compatibly) with non-trivial kernel (since otherwise $\psi'$ would have to factor through $\psi$). Applying this construction inductively gives the required tower $\{G_n\}$. In particular, the sequence $\{\# G_n\}_{n \geq 1}$ of orders is unbounded and satisfies $\#G_n \mid \#G_{n+1}$.

Next, by passing to a cofinal subtower in $\{G_n\}$, we may assume that the number $k(n)$ of distinct prime divisors of $\# G_n$ is also unbounded as $n \to \infty$. Indeed, if not, then the prime divisors of all the $\# G_n$'s are contained in a finite set of primes (as $\#G_n \mid \#G_{n+1}$). Choosing any prime $p$ that is not in this finite set and not in $S$, we see that each $G_n$ is a prime-to-$p$ quotient of $G$. But, since $G^{(p)}$ is finite, the tower $\{G_n\}$ would have be constant, which is a contradiction.

Now observe that the trivial lower bound $k(n)! \leq \# G_n$ then yields, via Stirling's formula, the lower bound
\[ \big(\frac{k(n)}{e}\big)^{k(n)} \leq \# G_n.\]
For each $n$, let $p_n$ be the smallest prime that is not in $S$ and that does not divide $\# G_n$. Then $G_n$ is a prime-to-$p_n$, and hence we obtain
\[ \# G_n \leq  \#G^{(p_n)} \leq p_n^c.\]
By the pigeonhole principle, $p_n$ is amongst the first $g(n) := k(n) + \#S + 1$ primes. Hence, by the prime number theorem, we have
\[ p_n = O\Big(g(n) \log\big(g(n)\big)\Big).\]
Combining this with the above two inequalities for $\#G_n$ gives
\[ \big(\frac{k(n)}{e}\big)^{k(n)} = O\Big(g(n)^c \cdot \log \big(g(n)\big)^c\Big).\]
Now $g(n)$ grows linearly in $k(n)$, so the right side above is $O(k(n)^{2c})$; this clearly contradicts the left side growing exponentially in $k(n)$, so there cannot be a tower $\{G_n\}$ as above, and thus $G$ is finite.
\end{proof}

This result implies the abstract Theorem~\ref{thm:MainThmIntro}:

\begin{proof} [Proof of Theorem~\ref{thm:MainThmIntro}]
Combine Theorem~\ref{thm:constructibility} with Proposition~\ref{prop:ProfiniteGroupFinitenessCriterion}.
\end{proof}

\begin{remark}
The growth bound imposed in Proposition~\ref{prop:ProfiniteGroupFinitenessCriterion} can be weakened (as the proof shows), but not eliminated completely. For example, let $G = \prod_{n \geq 5} A_n$ be the displayed product of alternating groups. We will show that $G^{(p)}$ is finite for all $p$. On the other hand, $G$ is infinite; this does not violate Proposition~\ref{prop:ProfiniteGroupFinitenessCriterion} since (as we will see) $\# G^{(p)} \approx (p-1)!$, which is much larger asymptotically than a polynomial in $p$.

For the finiteness, note that any continuous finite prime-to-$p$ quotient $G \to H$ factors through some finite quotient $G \to G' := \prod_{n = 5}^N A_n$ for $N$ sufficiently large. If $p \in \{2,3\}$, then each induced map $A_n \to G \to H$ must be $0$: the simple group $A_n$ has order divisible by $6$ for $n \geq 4$, while $H$ does not have order divisible by $6$. It follows that $H = 0$, so $G^{(p)} = 0$ for $p \in \{2,3\}$. For $p \geq 5$, the subgroup $G'' := \prod_{n=p}^N A_n \subset G'$ maps trivially to $H$ by a similar argument (as each  summand $A_n \subset G''$ a simple group with order divisible by $p$). It follows that $G \to H$ factors over the projection $G \to \prod_{n=5}^{p-1} A_{n}$, and hence the latter projection is a maximal prime-to-$p$ quotient $G^{(p)}$ of $G$. In particular, $G^{(p)}$ is finite for all $p$, and $\#G^{(p)} = \prod_{n=5}^{p-1} \frac{n!}{2}$ (with the convention that the empty product is $1$).
\end{remark}

\section{Proof of  Theorems \ref{thm:CSTIntro} and \ref{thm:generalbound}}
\label{sec:proofs}

Let $k$ be an algebraically closed field of positive characteristic $p$ and let $X = \mathrm{Spec}(R)$ be a finite type affine $k$-scheme.  Let $x\in X(k)$ be a point.  Assume that $R$ is a normal domain with dimension $d$, and let $R_x$ denote the strict henselization of $X$ at $x$.  Let $\Delta $ be an effective $\mathbf{Q}$-divisor on $X$, and let $\Delta _x$ denote the induced $\mathbf{Q}$-divisor on $\mathrm{Spec}(R_x)$.  We will be concerned with pairs of the following type:

\begin{definition}[{\cite{HochsterHuneke}}]
The pair $(R_x, \Delta _x)$ is called \emph{strongly $F$-regular} if for any nonzero element $c\in R_x$, there exists $e>0$ such that the $R$-linear inclusion
$$
R_x\rightarrow R_x(\lfloor (p^e-1)\Delta \rfloor )^{1/p^e}
$$
sending $1$ to $c^{1/p^e}$ is split. If this condition is merely assumed for $c=1$, we say that $(R_x,\Delta_x)$ is {\em $F$-pure}.  
\end{definition}

 The other main definition we need is the following:
 \begin{definition}[{\cite[Definition 9]{HL}}]\label{D:signature}
 The \emph{$F$-signature} of $R_x$ is defined to be 
$$
s(R_x, x):= \lim _{e\rightarrow \infty }\frac{a_e }{p^{ed}},
$$
where for $e\geq 1$ the integer $a_e$ is defined to be the maximal integer $r$ for which there exists a surjection of $R_x$-modules
$$
\xymatrix{
R_x^{1/p^e}\ar@{->>}[r]& R_x^{\oplus r}}.
$$
\end{definition}

\begin{remark} As discussed in \cite[2.1]{HaraWatanabe}, Definition \ref{D:signature} can be extended to pairs $(X, \Delta )$. The following feature of these definitions seems noteworthy: one does not need to assume that $K_X$, $\Delta$, or $K_X + \Delta$ are $\mathbf{Q}$-Cartier, unlike the corresponding definitions for klt in characteristic $0$.
\end{remark}

The key point to the proof of Theorem \ref{thm:generalbound}, which is  implicit in \cite{CRST:pi1} is the following result.
Let $A\subset \mathbf{C}$ be a finitely generated normal $\mathbf{Z}$-subalgebra and let $(X_A, x_A)$ be a normal pointed flat $A$-scheme.

\begin{proposition}\label{P:bound}  Assume that there exists a $\mathbf{Q}$-divisor $\Delta _A$ on $X_A$ such that the triple $(X_{\mathbf{C}}, x_{\mathbf{C}}, \Delta _{\mathbf{C}})$ obtained by base change along $A\hookrightarrow \mathbf{C}$ has klt singularities  and is of dimension $d$ in a  neighborhood of $x_{\mathbf{C}}$.  Then there exists a dense open subset $U\subset \mathrm{Spec}(A)$ such that for any geometric point $\bar y\rightarrow U$ with positive residue characteristic $p_{\bar y}$ the fiber $(X_{\bar y}, x_{\bar y})$ is strongly $F$-regular and 
$$
s(X_{\bar y}, x_{\bar y})>1/p_{\bar y}^d.
$$
\end{proposition}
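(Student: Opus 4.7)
The plan is to combine the standard reduction-mod-$p$ theorems for klt singularities with an effective lower bound on the $F$-signature due to Polstra and Tucker.

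First, I would spread out a log resolution. Using Hironaka in characteristic zero, choose a projective log resolution $\pi_{\mathbf{C}}:Y_{\mathbf{C}}\to X_{\mathbf{C}}$ of $(X_{\mathbf{C}},\Delta_{\mathbf{C}})$ near $x_{\mathbf{C}}$, with
\[ K_{Y_{\mathbf{C}}} = \pi_{\mathbf{C}}^{*}(K_{X_{\mathbf{C}}}+\Delta_{\mathbf{C}}) + \sum_i a_i E_i - \pi_{\mathbf{C},*}^{-1}\Delta_{\mathbf{C}} \]
and all discrepancies $a_i>-1$ (the klt condition). After a finitely generated localization of $A$, this spreads out to a projective morphism $\pi_A:Y_A\to X_A$ with $Y_A$ smooth over $A$ and $\pi_A^{-1}(\mathrm{Supp}\,\Delta_A)\cup\mathrm{Exc}(\pi_A)$ a relative SNC divisor, the discrepancies being constant across geometric fibers. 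Applying the Hara--Watanabe--Takagi reduction-mod-$p$ theorem in families then yields a dense open $U_1\subset\mathrm{Spec}(A)$ such that the pair $(X_{\bar y},\Delta_{\bar y})$ is strongly $F$-regular at $x_{\bar y}$ for every geometric point $\bar y\to U_1$ of positive residue characteristic. This settles the strong $F$-regularity assertion of the proposition.

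Second, I would establish the strict bound $s(X_{\bar y},x_{\bar y})>1/p_{\bar y}^{d}$. The key input is the effective convergence estimate of Polstra--Tucker, asserting a bound of the form $|a_e/p^{ed}-s|\leq C/p^{e}$ where the constant $C$ is controlled by geometric invariants of $(X_A,\Delta_A)$ (Hilbert--Kunz type data, generator counts for relevant modules) whose variation in the family $X_A\to\mathrm{Spec}(A)$ is bounded, giving a uniform choice of $C$ over $U_1$. Combined with an explicit lower bound on the Frobenius splitting number $a_{e_0}(\bar y)$ at some level $e_0$ of order $\log p_{\bar y}$, obtained by transporting the birational positivity $a_i+1>0$ of the klt discrepancies from $Y_A$ down to $X_A$ via a Grothendieck trace construction, this uniform convergence then forces $s(X_{\bar y},x_{\bar y})>1/p_{\bar y}^{d}$ outside finitely many small primes, which can be absorbed by further shrinking $U_1$ to the required $U$.

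The main obstacle is the second step. Strong $F$-regularity alone only guarantees $a_e\geq 1$, i.e.\ $s\geq 0$; one has to squeeze out additional Frobenius splittings from the strict positivity of the log discrepancies and do so uniformly across the family over $U_1$. This is precisely where the Polstra--Tucker technology enters, and it also explains why the final bound has the particular shape $1/p^{d}$ rather than a stronger $p$-independent lower bound: the trace construction loses a factor of $p^d$ at each level, so the best one can prove by this method is strict inequality against $1/p^{d}$, which (happily) is exactly what Theorem~\ref{thm:MainThmIntro} needs in combination with the size bound $\#\pi_1^{loc}\leq 1/s$ from \cite{CRST:pi1}.
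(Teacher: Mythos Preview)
Your first step (spreading out a log resolution and invoking Hara--Watanabe--Takagi to get strong $F$-regularity on fibers) is fine and matches standard practice, though the paper does not actually need a resolution here. The gap is in your second step: the mechanism you sketch (uniform Polstra--Tucker convergence constants $C$ plus a lower bound on some $a_{e_0}$ coming from ``transporting discrepancies via Grothendieck trace'') is too vague to carry the argument, and the pieces do not obviously fit together to produce $s>1/p^d$. A convergence estimate $|a_e/p^{ed}-s|\le C/p^e$ is useless without already knowing $a_{e_0}/p^{e_0 d}$ exceeds $1/p^d$ by more than $C/p^{e_0}$, and you have not indicated how to get such a lower bound on $a_{e_0}$ from the discrepancies alone.

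The paper's argument is quite different and much more concrete. After Noether-normalizing to get a finite map $P_A=A[x_1,\dots,x_d]\to R$, one lets $c\in P_A$ be the discriminant. The klt hypothesis makes the log canonical threshold of $(X_{\mathbf C},\Delta_{\mathbf C})$ along $\mathrm{div}(c)$ strictly positive, so Takagi's comparison theorem gives, on a dense open in $\mathrm{Spec}(A)$, that the pair $\big(X_{\bar y},\Delta_{\bar y}+\tfrac{1}{p_{\bar y}-1}\mathrm{div}(c_{\bar y})\big)$ is $F$-pure. Unwinding, this says precisely that the map $R_{x_{\bar y}}\to R_{x_{\bar y}}^{1/p_{\bar y}}$ sending $1\mapsto c_{\bar y}^{1/p_{\bar y}}$ splits, i.e.\ $c^{1/p}$ generates a free summand. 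Because $c$ is the discriminant of a Noether normalization, the first proof of \cite[Theorem~5.1]{PolstraTuckerFsig} bootstraps this single splitting at level $e=1$ into $a_e\ge p^{(e-1)d}$ for all $e$, whence $s>1/p^d$ directly, with no appeal to convergence rates. The discriminant is the key idea you are missing: it is what links the klt condition (via the log canonical threshold of $\mathrm{div}(c)$) to the specific splitting that the Polstra--Tucker machinery can amplify.
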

\begin{proof}
We may without loss of generality assume that $X_A = \mathrm{Spec}(R)$ is affine.  For a geometric point $\bar y\rightarrow \mathrm{Spec}(A)$ let $R_{\bar y}$ denote the coordinate ring of the geometric fiber of $X_A$.  After shrinking on $\mathrm{Spec}(A)$ we may assume that the fibers are all normal \cite[IV, 12.2.4]{EGA}.
After further shrinking on $A$ we may further assume that we have a finite map
$$
P_A:= A[x_1, \dots, x_d]\rightarrow R
$$
Let $c\in P_A$ be a discriminant of this map given by $\text{det}(\text{tr}(e_ie_j))^2$ for a set of generators $e_i$ of $R$ over $P_A$, and 
let $\alpha _0$ denote the log canonical threshold of $(X_{\mathbf{C}}, \Delta _{\mathbf{C}})$ along $\text{div}(c)$ at $x_{\mathbf{C}}$.  So $\alpha _0>0$ and for any $t<\alpha _0$ the pair $(X_{\mathbf{C}}, \Delta _{\mathbf{C}}+t\text{div}(c))$ has log canonical singularities in a neighborhood of $x$ (see \cite[Definition 7.1]{PatakfalviSchwedeTucker}).  Let $\alpha $ be some fixed positive rational number with $\alpha <\alpha _0$.  Then a fundamental theorem of Takagi \cite[Corollary 3.5]{Takagi} (see also \cite[Theorem 7.6]{PatakfalviSchwedeTucker}) ensures that there exists a dense open $U \subset \mathrm{Spec}(A)$ such that for any geometric point $\bar y\rightarrow U$ the pair $(X_{\bar y}, \Delta _{\bar y}+t\text{div}(c_{\bar y}))$ is $F$-pure for $t\leq \alpha $.  As $\frac{1}{p-1}<\alpha $ for all but finitely many primes $p$, it follows that after shrinking on $U$ we can ensure that the pair $(X_{\bar y}, \Delta _{\bar y}+\frac{1}{p_{\bar y}-1}\text{div}(c_{\bar y}))$ is $F$-pure for all $\bar y\rightarrow U$ mapping to a closed point of $U$.  This implies, in particular, that the pair $(X_{\bar y}, \frac{1}{p_{\bar y}-1}\text{div}(c_{\bar y}))$ is $F$-pure and therefore for all $\bar y\rightarrow U$ mapping to a closed point of $U$ the map
$$
\xymatrix{
R_{x_{\bar y}}\ar[r]^-{(-)^{p_{\bar y}}}& R_{x_{\bar y}}^{1/p_{\bar y}}\ar[r]^-{c_{\bar y}^{1/p_{\bar y}}}& R_{x_{\bar y}}^{1/p_{\bar y}}}
$$
admits an $R_{x_{\bar y}}$-splitting.  Now the first proof of \cite[Theorem 5.1]{PolstraTuckerFsig} shows that 
$$
s(X_{\bar y}, x_{\bar y})>1/p_{\bar y}^d
$$
as desired.
\end{proof}

\begin{proof}[Proof of Theorems~\ref{thm:CSTIntro} and \ref{thm:generalbound}]
In the setup of Theorem~\ref{thm:CSTIntro}, we have
$$
\# \pi_1^{loc}(X,x)\leq 1/s(X, x)
$$
by \cite[Theorem 5.1]{CRST:pi1}; this yields Theorem~\ref{thm:CSTIntro}. Combining this with Proposition~\ref{P:bound} we obtain Theorem~\ref{thm:generalbound}.
\end{proof}

As promised, we also get a new proof of Xu's theorem:
 
\begin{proof}[Proof of Theorem~\ref{thm:XuIntro}]
Combine Theorem~\ref{thm:MainThmIntro} with Theorem~\ref{thm:generalbound}.
\end{proof}

\section{A result of Greb, Kebekus, and Peternell}
\label{sec:GKP}

The techniques from \S \ref{S:section3} and \S \ref{S:section4} can also be used to give a direct proof of Theorem~\ref{thm:GKPIntro}, which gives a global analog of \cite{ChenyangPi1} and forms  one of the main results of  \cite{GKP}. We recall (a slight variant of) the statement again: 

\begin{theorem}[{\cite[Theorem 1.5]{GKP}}]
\label{thm:GKP}
Let $X/\mathbf{C}$ be a connected and normal scheme of finite type. Assume there exists some effective $\mathbf{Q}$-Weil divisor $\Delta$ on $X$ such that $(X,\Delta)$ is klt. Let $Z \subset X$ be a closed subset of codimension $\geq 2$. Then there exists a finite cover $f:\widetilde{X} \to X$ of normal connected schemes that is \'etale over $U := X-Z$, and induces an isomorphism $\pi_1(f^{-1}(U)) \xrightarrow{\simeq} \pi_1(\widetilde{X})$.
\end{theorem}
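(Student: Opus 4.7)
\emph{Strategy.} I would construct $\widetilde{X}$ Galois-theoretically by producing a finite-index open normal subgroup $H \subset \pi_1(U)$ such that, for every geometric point $z \to Z$, the natural map $\iota_z : \pi_1(T_z(Z)) \to \pi_1(U)$ satisfies $\iota_z(\pi_1(T_z(Z))) \cap H = \{e\}$. Observe that $\iota_z$ itself is injective: $T_z(Z) = X_z^{sh} \cap U$ is a cofiltered limit of $V \times_X U$ as $V$ ranges over \'etale neighbourhoods of $z$, and each $V \times_X U \to U$ is an \'etale morphism whose fundamental group embeds as an open subgroup of $\pi_1(U)$, so in the limit $\pi_1(T_z(Z)) \to \pi_1(U)$ is the inclusion of an intersection of open subgroups. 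With such $H$, I take $\widetilde{X}$ to be the normalization of $X$ in the connected Galois cover $V \to U$ corresponding to $H$; I explain below why this produces the required cover.

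\emph{Stratification and finiteness.} Applying Theorem~\ref{thm:ConstructibilityMilnorpi1} to $(X, Z)$ yields a finite stratification $Z = \bigsqcup_\lambda Z_\lambda$ by connected locally closed subvarieties on which the conjugacy class of $\iota_z$ in $\pi_1(U)$ is locally constant; write $G_\lambda \subset \pi_1(U)$ for the common image at a chosen representative $z_\lambda \to Z_\lambda$. By Noetherian induction (refining the stratification if necessary), I may take $z_\lambda$ to be a generic geometric point of some irreducible component of $Z$. For such $z_\lambda$, the scheme $Z^{sh}_{z_\lambda}$ reduces to the closed point of $X_{z_\lambda}^{sh}$, so $T_{z_\lambda}(Z) = X_{z_\lambda}^{sh, \circ}$ and hence $G_\lambda \simeq \pi_1^{loc}(X, z_\lambda)$. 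Passing to an algebraically closed extension $L/\mathbf{C}$ large enough to contain a closed lift $z_{\lambda, L}$ of $z_\lambda$, Proposition~\ref{prop:Localpi1invariance} identifies $G_\lambda$ with $\pi_1^{loc}(X_L, z_{\lambda, L})$; since the klt condition is preserved under base change, Xu's theorem (Theorem~\ref{thm:XuIntro}, applied over $L$) shows that $G_\lambda$ is finite.

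\emph{Construction and verification.} The $G_\lambda \subset \pi_1(U)$ are finitely many finite subgroups of a profinite group, so by residual finiteness of $\pi_1(U)$ I can produce a finite-index open normal $H \subset \pi_1(U)$ with $G_\lambda \cap H = \{e\}$ for every $\lambda$ (for each non-identity $g \in G_\lambda$ choose an open normal subgroup of $\pi_1(U)$ not containing $g$, and intersect the finitely many resulting subgroups). Let $V \to U$ be the corresponding connected Galois cover and $f : \widetilde{X} \to X$ the normalization of $X$ in $V$; then $\widetilde{X}$ is a connected normal scheme, finite over $X$, and $f^{-1}(U) = V \to U$ is finite \'etale. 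To see that $\pi_1(f^{-1}(U)) \xrightarrow{\sim} \pi_1(\widetilde{X})$, surjectivity is automatic since $f^{-1}(U)$ is dense open in the normal scheme $\widetilde{X}$, and the kernel of this surjection is (by the standard normal-scheme van Kampen description) normally generated by the local tube groups $\pi_1(T_{\widetilde z}(f^{-1}(Z)))$ as $\widetilde z$ ranges over $f^{-1}(Z)$. Each such tube group identifies with the stabilizer in $\pi_1(T_z(Z))$ (for $z = f(\widetilde{z}) \in Z_\lambda$) of the corresponding fiber point under the action through $\pi_1(U)/H$; by normality of $H$ this stabilizer is $\iota_z^{-1}(H)$, and the injectivity of $\iota_z$ together with $G_\lambda \cap H = \{e\}$ forces $\iota_z^{-1}(H) = \{e\}$, so the kernel vanishes.

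\emph{Main obstacle.} The most delicate point is the finiteness of each $G_\lambda$: it requires arranging the representatives $z_\lambda$ so that $T_{z_\lambda}(Z)$ actually coincides with the punctured strict Henselization at $z_\lambda$, which in turn forces $z_\lambda$ to be generic in an irreducible component of $Z$. Handling strata whose ``natural'' generic points fail this condition requires an additional Noetherian refinement of the stratification of $Z$. Once this geometric input is in place, the remaining steps are mechanical: residual finiteness of profinite groups, the base-change invariance in Proposition~\ref{prop:Localpi1invariance}, Xu's theorem (Theorem~\ref{thm:XuIntro}), and the standard description of the kernel $\pi_1(f^{-1}(U)) \to \pi_1(\widetilde X)$ for normal schemes.
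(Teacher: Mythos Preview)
Your overall Galois-theoretic strategy is close to the paper's, but there are two problems, one cosmetic and one genuine.

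First, the claimed injectivity of $\iota_z : \pi_1(T_z(Z)) \to \pi_1(U)$ is false, and your argument for it is incorrect: an \'etale morphism $V \times_X U \to U$ (with $V$ an \'etale neighbourhood of $z$) does \emph{not} in general induce an open embedding on $\pi_1$, nor does $\pi_1$ commute with the relevant cofiltered limit. Fortunately this claim is not actually needed. In your verification step, what you must show is that the \emph{image} of $\pi_1(T_{\widetilde z}(f^{-1}(Z))) \to \pi_1(V)$ is trivial, and this follows from $\mathrm{Im}(\iota_z) \cap H = \{e\}$ alone: the preimage $\iota_z^{-1}(H)$ may well equal $\ker(\iota_z) \neq \{e\}$, but its image in $H \subset \pi_1(U)$ is $\mathrm{Im}(\iota_z) \cap H = \{e\}$, which suffices.

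The second problem is the real gap. You assert that ``by Noetherian induction \dots I may take $z_\lambda$ to be a generic geometric point of some irreducible component of $Z$'', but for any stratum $Z_\lambda$ that is not dense in a component of $Z$ this is impossible: the generic point $z_\lambda$ of such a $Z_\lambda$ lies in the closure of a larger stratum of $Z$, so the preimage of $Z$ in $X^{sh}_{z_\lambda}$ is strictly larger than the closed point, $T_{z_\lambda}(Z) \subsetneq X^{sh,\circ}_{z_\lambda}$, and Xu's theorem does not apply. No ``refinement of the stratification'' repairs this --- refining only produces more strata whose generic points lie even deeper in $Z$. You therefore have no argument that $G_\lambda$ is finite for the non-open strata, and without that finiteness you cannot construct $H$ in one step.

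The paper handles exactly this point by induction on $\dim(Z)$. One applies Theorem~\ref{thm:ConstructibilityMilnorpi1} and Xu's theorem only at generic points of the \emph{open} stratum $Z_{open}$ (where $T_z(Z)$ genuinely is the punctured henselization), and chooses $H$ to kill just those finitely many finite images. One then checks that for the resulting normalization $g:\overline{V} \to X$, every further finite \'etale cover $W \to V$ has $\overline{W} \to \overline{V}$ \'etale over $Z_{open}$; this gives $\pi_1(V) \xrightarrow{\sim} \pi_1(\overline{V} \setminus g^{-1}(Z_{closed}))$, where $Z_{closed} = Z \setminus Z_{open}$ has strictly smaller dimension. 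Since $(\overline{V}, g^*\Delta)$ is again klt (as $g$ is \'etale in codimension one), the inductive hypothesis applied to $(\overline{V}, g^{-1}(Z_{closed}))$ furnishes a further cover, and the composite is the desired $\widetilde{X}$. In short, one never proves finiteness of the tube-group images at deep points of $Z$ directly; one instead passes to a cover in which those deep points become generic in the new, smaller bad locus, and iterates.
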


Theorem~\ref{thm:GKP} implies Theorem~\ref{thm:GKPIntro} by setting $Z$ to be the singular locus of $X$. We will deduce Theorem~\ref{thm:GKP} directly from the local result in \cite{ChenyangPi1} (i.e., from Theorem~\ref{thm:XuIntro}) using Theorem~\ref{thm:ConstructibilityMilnorpi1} and formal aspects of the Galois correspondence. In fact, the argument below is a variant of that used in \cite{BCGST} to establish a characteristic $p$ analog of Theorem~\ref{thm:GKP}. We include it here to highlight the utility of non-closed points in such arguments. Indeed, the proof below invokes \cite{ChenyangPi1} at the generic points of the bad locus $Z$; this is necessary because \cite{ChenyangPi1} only applies to complements of points, and is possible in our approach as we work with schemes. In contrast, \cite{GKP} rely on Whitney stratification results from Goresky-Macpherson \cite{GoreskyMacpherson} instead of Theorem~\ref{thm:ConstructibilityMilnorpi1}, and thus do not have access to generic points. Consequently, they also employ certain Lefschetz arguments to reduce to isolated singularities by repeatedly slicing with very general hyperplanes, which ultimately leads to an additional quasi-projectivity hypothesis in their version of Theorem~\ref{thm:GKP}.

\begin{proof}
We prove this by induction on $\dim(Z)$. The base case $\dim(Z) = -1$, corresponding to $Z = \emptyset$, is vacuously true. In general, adopting the notation of Theorem~\ref{thm:ConstructibilityMilnorpi1} and applying its conclusion, we find a dense open $Z_{open} \subset Z$ such that $\pi_1(T_z(Z))$ is constant (as made precise in Theorem~\ref{thm:ConstructibilityMilnorpi1} via specialization maps) as $z$ varies through geometric points of $Z_{open}$. Using this constancy and Theorem~\ref{thm:XuIntro} applied to the generic points of $Z_{open}$, we learn the following: for any geometric point $z \to Z_{open}$, the group $\pi_1(T_z(Z))$ is finite and the image of $\pi_1(T_z(Z)) \to \pi_1(X-Z)$ is independent of $z$ up to conjugation; here we use Proposition~\ref{prop:Localpi1invariance} and the Lefschetz principle to ensure that Theorem~\ref{thm:XuIntro} applies at the generic points mentioned above. Write $Z_{closed} \subset Z$ for the complement of $Z_{open}$, so $\dim(Z_{closed}) < \dim(Z)$. Our strategy is to first solve the extension problem for $X - Z \subset X - Z_{closed}$, and then apply induction to the resulting cover of $X - Z_{closed}$. We will need the following elementary fact:

\begin{lemma}
Given a profinite group $G$ and a finite subgroup $K \subset G$, there exists an open normal subgroup $H \subset G$ such that $H \cap gKg^{-1} = 0$ for all $g \in G$.
\end{lemma}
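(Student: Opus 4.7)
The plan is to reduce the problem to finding an open normal subgroup disjoint from $K$ itself, and then exploit normality to handle all conjugates at once. This is a standard feature of profinite groups, but let me outline it carefully.

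First I would use that $G$ is profinite (hence Hausdorff with a neighborhood basis of the identity consisting of open normal subgroups) and that $K$ is \emph{finite}. For each non-identity element $k \in K \setminus \{e\}$, the singleton $\{k\}$ is disjoint from $\{e\}$, so there exists an open normal subgroup $N_k \trianglelefteq G$ with $k \notin N_k$. Since $K$ is finite, I may set $H := \bigcap_{k \in K \setminus \{e\}} N_k$, which is still an open normal subgroup of $G$ (a finite intersection of open normal subgroups). By construction, no nontrivial element of $K$ lies in $H$, so $H \cap K = \{e\}$.

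Now I would use normality to bootstrap to all conjugates. Since $H$ is normal, $gHg^{-1} = H$ for every $g \in G$, and therefore
\[
H \cap gKg^{-1} \;=\; gHg^{-1} \cap gKg^{-1} \;=\; g(H \cap K)g^{-1} \;=\; \{e\},
\]
which is the required conclusion. The whole argument hinges only on the fact that $K$ has finitely many elements, so that only finitely many open normal subgroups need to be intersected; there is no genuine obstacle here, and in particular one does not need to consider the (potentially infinite) collection of conjugates $\{gKg^{-1}\}_{g \in G}$ separately.
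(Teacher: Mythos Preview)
Your proof is correct and follows essentially the same approach as the paper: both arguments first produce an open normal subgroup $H$ with $H \cap K = \{e\}$ (the paper does this by intersecting $K$ with the kernels of the finite quotients $G \to Q_i$, you do it by separating each nontrivial element of $K$ from the identity), and then both use normality of $H$ to pass from $H \cap K = \{e\}$ to $H \cap gKg^{-1} = \{e\}$ for all $g$.
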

\begin{proof}
Write $G = \lim Q_i$ as a continuous inverse limit of finite groups. Then $H_i := \ker(G \to Q_i)$ is an open normal subgroup of $G$, and $\cap_i H_i = 0$. This gives $\cap_i (H_i \cap K) = 0$. But then $H_i \cap K = 0$ for $i \gg 0$ as $K$ is finite. As the $H_i$'s are normal, this also yields $H_i \cap gKg^{-1} = 0$ for all $i \gg 0$, as wanted.
\end{proof}

Using this lemma, we may choose choose an open normal subgroup $H \subset \pi_1(U)$ which intersects the finite image of $\pi_1(T_z(Z)) \to \pi_1(U)$ trivially for every geometric point $z \to Z_{open}$. For any finite \'etale map $W \to U$, write $\overline{W} \to X$ for the normalization of $X$ in $W$. Now choose $V \to U$ to be the finite \'etale cover corresponding to $H$, so $H := \pi_1(V) \to \pi_1(U)$ meets the image of $\pi_1(T_z(Z)) \to \pi_1(X)$ trivially for any geometric point $z \to Z_{open}$. By the Galois correspondence, for each such $z$, the base change $V \times_U T_z(Z)$ splits into a disjoint union of copies of the cover $T_z(Z)' \to T_z(Z)$ corresponding to the kernel of the map $\pi_1(T_z(Z)) \to \pi_1(U)$. By the same reasoning applied to further covers of $V$, we learn: if $W \to V$ is a finite \'etale cover, then $W \times_U T_z(Z) \to V \times_U T_z(Z)$ is a disjoint union of sections for any geometric point $z \to Z_{open}$. As normalization commutes with \'etale localization, it follows that for such $W$, the normalized map $\overline{W} \to \overline{V}$ is finite \'etale over all points of $Z_{open}$. Thus, if $g:\overline{V} \to X$ is the structure map, then $\pi_1(V) \xrightarrow{\simeq} \pi_1(\overline{V} - g^{-1}(Z_{closed}))$, and the same is true if $V$ is replaced by a finite \'etale cover of $V$. 

As $\overline{V} \to X$ is \'etale in codimension $1$, the pair $(\overline{V}, g^* \Delta)$ is klt by \cite[Proposition 5.20]{KollarMori}. We also have $\dim(g^{-1}(Z_{closed})) = \dim(Z_{closed}) < \dim(Z)$.  By induction applied the pair $(\overline{V}, g^* \Delta)$ equipped with the closed subset $g^{-1}(Z_{closed})$, we can find a finite cover $h:\widetilde{X} \to \overline{V}$ of normal connected schemes that is \'etale outside $g^{-1}(Z_{closed})$ such that $\pi_1(\widetilde{X} - f^{-1}(Z_{closed})) \xrightarrow{\simeq} \pi_1(\widetilde{X})$, where $f = g \circ h$ is the composite map. As $\widetilde{X} \to \overline{V}$ is \'etale over $V$, we also know from the previous paragraph that $\pi_1(\widetilde{X} - f^{-1}(Z)) \xrightarrow{\simeq} \pi_1(\widetilde{X} - f^{-1}(Z_{closed}))$. Combining these gives the desired statement. 
\end{proof}

\end{document}